\pgfplotsset{compat=1.16}
\newtheorem{theorem}{Theorem}[section]
\newtheorem{lemma}[theorem]{Lemma}
\newtheorem{proposition}[theorem]{Proposition}
\newtheorem{corollary}[theorem]{Corollary}
\theoremstyle{definition}
\newtheorem{example}[theorem]{Example}
\newtheorem{remark}[theorem]{Remark}
\begin{document}
\title{R\'edei permutations with the same cycle structure}
\author{Juliane Capaverde, Ariane M. Masuda,  and Virg\'inia M. Rodrigues}
\date{\today}
\address{Departamento de Matem\'{a}tica Pura e Aplicada, Universidade Federal do Rio Grande do Sul, Avenida Bento Gonçalves, 9500, Porto Alegre, 91509-900 Brazil}

\email{juliane.capaverde@ufrgs.br, vrodrig@mat.ufrgs.br}

\address{Department of Mathematics, New York City College of Technology, CUNY, 300 Jay Street, Brooklyn, NY 11201 USA}

\email{amasuda@citytech.cuny.edu}

\thanks{The second author received support for this project
provided by a PSC-CUNY grant, jointly funded by The Professional Staff Congress and The City University of New York.}
\keywords{R\'edei function, R\'edei permutation, involution, permutation polynomial, cycle structure}

\begin{abstract}
Let $\mathbb{F}_{q}$ be the finite field of order $q$, and
$\mathbb{P}^{1}(\mathbb{F}_{q}) = \mathbb{F}_{q}\cup \{\infty \}$. Write
$(x+\sqrt{y})^{m}$ as $N(x,y)+D(x,y)\sqrt{y}$. For $m\in \mathbb{N}$ and
$a \in \mathbb{F}_{q}$, the R\'{e}dei function
$R_{m,a}\colon \mathbb{P}^{1}(\mathbb{F}_{q}) \to \mathbb{P}^{1}(
\mathbb{F}_{q})$ is defined by $N(x,a)/D(x,a)$ if $D(x,a)\neq 0$ and
$x\neq \infty $, and $\infty $, otherwise. In this paper we give a complete
characterization of all pairs $(m,n)\in \mathbb{N}^{2}$ such that the R\'{e}dei
permutations $R_{m,a}$ and $R_{n,b}$ have the same cycle structure when
$a$ and $b$ have the same quadratic character and $q$ is odd. We explore
some relationships between such pairs $(m,n)$, and provide explicit families
of R\'{e}dei permutations with the same cycle structure. When a R\'{e}dei permutation
has a unique cycle structure that is not shared by any other R\'{e}dei permutation,
we call it \emph{isolated}. We show that the only isolated
R\'{e}dei permutations are the isolated R\'{e}dei involutions. Moreover, all
our results can be transferred to bijections of the form $mx$ and
$x^{m}$ on certain domains.
\end{abstract}
\maketitle

\section{Introduction}
\label{sec1}

The cycle structure of a permutation captures important information about
its dynamics. In the case of permutations induced by polynomials over a
finite field, the cycle structures of only a few polynomials are known.
Among these polynomials are the monomials~\cite{MR240184}, Dickson polynomials~\cite{MR1159877},
R\'{e}dei functions~\cite{MR3384830}, Chebyshev polynomials~\cite{MR3911212},
certain $q$-linearized polynomials~\cite{MR959697,MR3911214}, polynomials
with low Carlitz rank~\cite{MR2435050}, polynomials of the form
$x+\gamma f(x)$ over $\mathbb{F}_{q^{n}}$ where
$f\colon \mathbb{F}_{q^{n}}\to \mathbb{F}_{q}$~\cite{MR4149368}, and those
ones induced by certain piecewise-affine permutations of
$\{1,\ldots ,n\}$~\cite{MR4081007}. Another related challenging problem
is to find families of polynomials with the same cycle structure. The literature
in this direction is even more restrict; see~\cite{MR4149368} for instance.
In this paper we study permutations induced by R\'{e}dei functions over a
finite field with the same cycle structure.

Let ${\mathbb{F}}_{q}$ be the finite field of order $q$, and
$\mathbb{P}^{1}({\mathbb{F}}_{q})=\mathbb{F}_{q}\cup \{\infty \}$. We consider
the binomial expansion of $\displaystyle (x+\sqrt{y})^{m}$ written as
$N(x,y)+D(x,y)\sqrt{y}$. For $m\in \mathbb{N}$ and
$a \in {\mathbb{F}}_{q}$, the R\'{e}dei function
$R_{m,a}\colon \mathbb{P}^{1}(\mathbb{F}_{q}) \to \mathbb{P}^{1}(
\mathbb{F}_{q})$ is defined by
\begin{align*}
R_{m,a}(x)=
\begin{cases}
\dfrac{N(x,a)}{D(x,a)} & \text{ if } D(x,a)\neq 0 \text{ and } x\neq
\infty
\\
\infty & \text{ otherwise.}
\end{cases}
\end{align*}
When $a\in {\mathbb{F}}_{q}^{*}$ and $q$ is odd, Carlitz
\cite{MR137702} obtained the following explicit formula:
\begin{equation*}
\label{Rn}
R_{m,a}(x) = \sqrt{a}
\dfrac{(x+\sqrt{a})^{m}+(x-\sqrt{a})^{m}}{(x+\sqrt{a})^{m}-(x-\sqrt{a})^{m}},
\end{equation*}
where $\sqrt{a}$ is an element of $\mathbb{F}_{q}$ or
$\mathbb{F}_{q^{2}}$.

We refer to a R\'{e}dei bijection as a R\'{e}dei permutation. In~\cite{MR3384830}
Qureshi and Panario describe the cycle structure of a R\'{e}dei permutation
over $\mathbb{P}^{1}({\mathbb{F}}_{q})$. At the end of their paper they propose
the problem of investigating the conditions under which two R\'{e}dei permutations
$R_{m,a}$ and $R_{n,b}$ have the same cycle structure. The goal of this
paper is to address this question. Our first main result provides a complete
characterization of all pairs $(m,n)\in \mathbb{N}^{2}$ that satisfy the
following condition: $R_{m,a}$ and $R_{n,b}$ are R\'{e}dei permutations with
the same cycle structure for some $a,b\in \mathbb{F}_{q}$ with
$\chi (a)=\chi (b)$ and $q$ odd. The case $\chi (a)\neq \chi (b)$ will
be addressed in a forthcoming paper. By $\chi (a)$, we mean the quadratic
character of $a$, that is, $\chi (a) = 1$ if $a$ is a square in
${\mathbb{F}}_{q}$, and $\chi (a)=-1$ otherwise.

In our recent work~\cite{MR4171318} we investigate R\'{e}dei permutations
that only decompose into $1$- and $p$-cycles, where $p$ is $4$ or a fixed
prime. We note that our results in~\cite{MR4171318} do not immediately
provide a way of characterizing all R\'{e}dei permutations that have the
same cycle structure given by $1$- and $p$-cycles, except when $p=2$. In
fact, our approach in the present paper is different from the one we used
there.

A closer investigation of the pairs $(m,n)$ that yield R\'{e}dei permutations
with the same cycle structure reveals some symmetric properties. We explore
them, and use an example to illustrate the distribution of such pairs over
certain lines, making those properties evident. We also find several explicit
families of R\'{e}dei permutations with the same cycle structure.

When a R\'{e}dei permutation has a unique cycle structure, in the sense that
no other R\'{e}dei permutation has the same cycle structure, we call it \emph{isolated}.
Our second main result shows that the only isolated R\'{e}dei
permutations are the isolated R\'{e}dei involutions. This provides a connection
with our previous work. Using results from~\cite{MR4171318}, we determine
all values of $m$ for which $R_{m,a}$ is an isolated involution.

We organize this paper as follows. In Section~\ref{preliminaries} we present
some results that are instrumental in our proofs. Section~\ref{principal}
contains our two main results. In Section~\ref{meio} we show several results
that guarantee the existence of certain pairs $(m,n)$ such that
$R_{m,a}$ and $R_{n,b}$ have the same cycle structure, and indicate how
some of them are related to each other through some symmetries. In Section~\ref{families}
we exhibit several families of R\'{e}dei permutations with the same cycle
structure. In Section~\ref{conclusao} we use a link between R\'{e}dei functions
$R_{m,a}$ and functions of the form $mx$ and $x^{m}$ to transfer the results
in this paper to these two functions over certain domains.

From this point on, unless specified otherwise, we will assume that
$q$ is a power of an odd prime, $a,b\in \mathbb{F}_{q}^{*}$,
$\chi \in \{-1,1\}$, and all other variables are positive integers.

%s2 #&#
\section{The cycle structure of a R\'edei permutation}
%%LEAP%%%\label{sec2}
\label{preliminaries}

In this section we discuss some known results about R\'{e}dei permutations
and their cycle structures. In~\cite{bibVTEX} Deng obtains the following
condition for arbitrary permutations on finite sets to have the same cycle
structure.

%p2.1 #&#
\begin{proposition}%
%%LEAP%%%\label{prop2.1}
\label{Deng}%
\cite[Lemma 9]{bibVTEX} Let $X_{1}$ and $X_{2}$ be finite sets, and
$f_{1}\colon X_{1}\to X_{1}$ and $f_{2}\colon X_{2}\to X_{2}$ be permutations.
Then $f_{1}$ and $f_{2}$ have the same cycle structure if and only if the
numbers of fixed points of $f_{1}^{r}$ and $f_{2}^{r}$ are the same for
every positive integer $r$.
\end{proposition}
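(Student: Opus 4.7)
The plan is to prove both directions by observing that the number of fixed points of $f^r$ is a simple linear function of the cycle-length multiplicities. Let $c_d^{(i)}$ denote the number of $d$-cycles of $f_i$, so that $|X_i|=\sum_{d\geq 1} d\, c_d^{(i)}$. A point $x\in X_i$ is fixed by $f_i^r$ if and only if the length of the cycle of $f_i$ containing $x$ divides $r$, and each such $d$-cycle contributes exactly $d$ fixed points. Hence, writing $\mathrm{Fix}(g)$ for the number of fixed points of a permutation $g$,
\begin{equation*}
\mathrm{Fix}(f_i^r)=\sum_{d\mid r} d\, c_d^{(i)}.
\end{equation*}

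For the forward direction, if $f_1$ and $f_2$ have the same cycle structure then $c_d^{(1)}=c_d^{(2)}$ for every $d$, and the identity above immediately gives $\mathrm{Fix}(f_1^r)=\mathrm{Fix}(f_2^r)$ for every positive integer $r$.

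For the converse, I would argue by strong induction on $r$ that $c_r^{(1)}=c_r^{(2)}$. For $r=1$, the identity reads $\mathrm{Fix}(f_i)=c_1^{(i)}$, and the assumption gives $c_1^{(1)}=c_1^{(2)}$. For the inductive step, assuming $c_d^{(1)}=c_d^{(2)}$ for all proper divisors $d$ of $r$, the identity rearranges as
\begin{equation*}
r\, c_r^{(i)}=\mathrm{Fix}(f_i^r)-\sum_{\substack{d\mid r\\ d<r}} d\, c_d^{(i)},
\end{equation*}
and both terms on the right-hand side agree for $i=1,2$, giving $c_r^{(1)}=c_r^{(2)}$. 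One could equivalently invoke Möbius inversion to write $r\, c_r^{(i)}=\sum_{d\mid r}\mu(r/d)\,\mathrm{Fix}(f_i^d)$, but the inductive argument is the most economical. Since there is no genuine obstacle here---the statement follows from a bijective counting identity followed by an elementary inversion---the only point to be slightly careful about is to note that $c_d^{(i)}=0$ for $d$ larger than $|X_i|$, so the sums are finite and the induction terminates.
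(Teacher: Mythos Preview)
Your proof is correct. The identity $\mathrm{Fix}(f_i^r)=\sum_{d\mid r} d\,c_d^{(i)}$ is exactly the right observation, and the strong induction (equivalently, M\"obius inversion) cleanly recovers each $c_r^{(i)}$ from the sequence of fixed-point counts.

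Note, however, that the paper does not supply its own proof of this proposition: it is quoted verbatim as \cite[Lemma~9]{Deng} and used as a black box. So there is no ``paper's proof'' to compare against here. Your argument is the standard one and would serve perfectly well as a self-contained justification; it is essentially the same reasoning Deng gives.
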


In~\cite{MR3384830} Qureshi and Panario describe precisely when a R\'{e}dei
function induces a permutation on $\mathbb{P}^{1}(\mathbb{F}_{q})$. They
also give the cycle structure of a R\'{e}dei permutation and a formula for
the number of fixed points. We denote the order of $m$ modulo $d$ by
$o_{d}(m)$, and the Euler function by $\varphi $. A cycle decomposition
consisting of $n_{1}$ fixed points and $n_{c_{i}}$ directed $c_{i}$-cycles
for $i=1,\dots ,k$ is denoted by
$\left (n_{1} \times \{\bullet \}\right ) \oplus \bigoplus _{i=1}^{k}
\left (n_{c_{i}} \times \mathrm{Cyc}(c_{i})\right )$ where
$\mathrm{Cyc(c)}$ denotes a directed $c$-cycle.

%p2.2 #&#
\begin{proposition}%
%%LEAP%%%\label{prop2.2}
\label{fixpt}
%
%
%3 items enumerated
%the first item: i
%the last item: iii
% the (last) widest item: iii
%
\begin{enumerate}[{\normalfont (i)}]
\item%\label{decomp}%
\cite[Corollary 4.7]{MR3384830} The R\'{e}dei function $R_{m,a}$ induces
a permutation on $\mathbb{P}^{1}(\mathbb{F}_{q})$ if and only if
$\gcd (m,q-\chi (a))=1$. In this case, the decomposition of
$R_{m,a}$ in disjoint cycles is
\begin{equation*}
\left ((1+\chi (a)) \times \{\bullet \}\right ) \oplus \bigoplus _{d
\mid q-\chi (a)} \left \{  \dfrac{\varphi (d)}{o_{d}(m)}\times
\mathrm{Cyc}(o_{d}(m))\right \}  .%
\end{equation*}
\item
\cite[Corollary 4.8]{MR3384830} The number of fixed points of
$R_{m,a}$ in $\mathbb{P}^{1}(\mathbb{F}_{q})$ is
$\gcd (m-1,q-\chi (a))+\chi (a)+1$.
\item
\cite[Corollary 4.9]{MR3384830} The number of fixed points in the
$r^{\mathrm{th}}$ iteration of $R_{m,a}$ is
$\gcd (m^{r}-1,q-\chi (a))+\chi (a)+1$.
\end{enumerate}
\end{proposition}

By combining the above results, we have the following criterion to determine
whether or not two R\'{e}dei permutations have the same cycle structure.

%p2.3 #&#
\begin{proposition}%
%%LEAP%%%\label{prop2.3}
\label{conseq1}
Suppose $\gcd (m,q-\chi (a)) = \gcd (n,q-\chi (b))=1$. Then
$R_{m,a}$ and $R_{n,b}$ have the same cycle structure if and only if
$\gcd (m^{r}-1,q-\chi (a))+\chi (a) = \gcd (n^{r}-1,q-\chi (b))+\chi (b)$
for all positive integers $r$.
\end{proposition}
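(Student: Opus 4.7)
The proposition is essentially a direct corollary of the two background facts already on the table, so the plan is to combine them carefully. First I would verify that $R_{m,a}$ and $R_{n,b}$ are actually permutations: by Proposition~\ref{fixpt}(\ref{decomp}), the assumption $\gcd(m,q-\chi(a))=\gcd(n,q-\chi(b))=1$ is exactly the condition needed, so both induce bijections of $\mathbb{P}^1(\mathbb{F}_q)$. This legitimizes talking about their cycle structures and applying Deng's criterion.

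Next I would invoke Proposition~\ref{Deng} with $X_1=X_2=\mathbb{P}^1(\mathbb{F}_q)$, $f_1=R_{m,a}$, and $f_2=R_{n,b}$. It tells us that $R_{m,a}$ and $R_{n,b}$ share a cycle structure if and only if, for every positive integer $r$, the $r$-th iterates $R_{m,a}^{\,r}$ and $R_{n,b}^{\,r}$ have the same number of fixed points.

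Now I would plug in the explicit fixed-point count for iterates given by Proposition~\ref{fixpt}(\ref{rpontosfixos}): the number of fixed points of $R_{m,a}^{\,r}$ is $\gcd(m^r-1,q-\chi(a))+\chi(a)+1$, and symmetrically for $R_{n,b}^{\,r}$. Setting these equal for all $r$ and cancelling the common $+1$ on each side yields precisely the stated identity
\[
\gcd(m^r-1,q-\chi(a))+\chi(a) \;=\; \gcd(n^r-1,q-\chi(b))+\chi(b),
\]
for all $r\ge 1$, which closes the equivalence in both directions.

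There is no real obstacle here; the only thing to be careful about is the logical ordering — one must first secure the permutation property before invoking Proposition~\ref{Deng}, since the latter is stated for permutations. Everything else is a matter of substitution and cancellation.
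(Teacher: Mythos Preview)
Your proposal is correct and mirrors exactly what the paper does: the proposition is stated as an immediate consequence of combining Proposition~\ref{Deng} with Proposition~\ref{fixpt}(\ref{rpontosfixos}), and the paper does not even spell out the argument beyond ``by combining the above results.'' Your write-up simply makes that combination explicit, including the harmless cancellation of the $+1$ on both sides.
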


%s3 #&#
\section{Main results}
%%LEAP%%%\label{sec3}
\label{principal}

We recall that $R_{m,a}$ permutes $\mathbb{P}^{1}(\mathbb{F}_{q})$ if and
only if $\gcd (m,q-\chi (a))=1$. Since $q$ is odd, it follows that
$m$ must be odd. In addition, $R_{m,a}$ and $R_{n,a}$ induce the same function
if and only if $m\equiv n \pmod{q-\chi (a)}$. The permutations
$R_{m,a}$ and $R_{m,b}$ have the same cycle structure whenever
$\chi (a)=\chi (b)$, so to ease the notation, we use $\chi $ to represent
this common value, that is, $\chi \in \{-1,1\}$.

Let $\mathcal{S}_{\chi }^{q}$ be the set of all
$(m,n)\in \mathbb{N}^{2}$ such that $R_{m,a}$ and $R_{n,b}$ are R\'{e}dei
permutations with the same cycle structure for some
$a,b\in {\mathbb{F}}_{q}$ with $\chi (a)=\chi (b)=\chi $. Clearly,
$(m,n)\in S_{\chi }^{q}$ if and only if $(n,m)\in S_{\chi }^{q}$. In this
section we give necessary and sufficient conditions for a pair
$(m,n)$ to belong to $\mathcal{S}_{\chi }^{q}$. We observe that
$R_{1,a}$ is the identity function, thus its cycle structure consists of
isolated points only. Therefore we may restrict ourselves to the case
$1<m<n<q-\chi $.

We start with some auxiliary results that will allow us to prove our first
main result. The $p$-adic valuation of an integer $z$ is denoted by
$\nu _{p}(z)= \alpha $ where $p^{\alpha }\mid \mid z$.

%l3.1 #&#
\begin{lemma}%
%%LEAP%%%\label{lem3.1}
\label{lemmagcdp}%
Let $p$ be a prime such that $p\nmid m$, and let $\theta =o_{p}(m)$.
%
%
%2 items enumerated
%the first item: i
%the last item: ii
% the (last) widest item: ii
%
\begin{enumerate}[{\normalfont (i)}]
\item If $p$ is odd, or if $p=2$ and $\nu _{2}(m-1)>1$, then
\begin{equation*}
\nu _{p}(m^{r}-1) =
\begin{cases}
0& \text{ if }\theta \nmid r
\\
\nu _{p}(m^{\theta }-1)+\nu _{p}(t) &\text{ if }r=t\theta .
\end{cases}
\end{equation*}
\item If $p=2$, then
\begin{equation*}
\nu _{2}(m^{r}-1) =
\begin{cases}
\nu _{2}(m-1)& \text{ if } r \text{ is odd}
\\
\nu _{2}(m^{2}-1)+\nu _{2}(r)-1 & \text{ if }r \text{ is even}.
\end{cases}
\end{equation*}
\end{enumerate}
\end{lemma}

\begin{proof}
The result follows from the Lifting the Exponent Lemma.
\end{proof}

%l3.2 #&#
\begin{lemma}%
%%LEAP%%%\label{lem3.2}
\label{mainprimelemma}
Let $p$ be a prime such that $p\nmid m,n$, and $\alpha $ be a positive
integer. Then
$\gcd (m^{r}-1,p^{\alpha }) = \gcd (n^{r}-1,p^{\alpha })$ for any positive
integer $r$ if and only if $\theta := o_{p}(m) = o_{p}(n)$ and one of the
following conditions holds:
%
%
%4 items enumerated
%the first item: i
%the last item: iv
% the (last) widest item: iii
%
\begin{enumerate}[{\normalfont (i)}]
\item $\alpha =1$,
\item $\alpha >1$, $p\neq 2$, and
$\gcd (m^{\theta }-1,p^{\alpha }) = \gcd (n^{\theta }-1,p^{\alpha })$,
\item $\alpha >1$, $p = 2$, $\nu _{2}(m-1)>1$, and
$\gcd (m-1,2^{\alpha }) = \gcd (n-1,2^{\alpha })$,
\item $\alpha >1$, $p = 2$, $\nu _{2}(m-1)=1$, and
$\gcd (m^{j}-1,2^{\alpha }) = \gcd (n^{j}-1,2^{\alpha })$ for
$j \in \{1,2\}$.
\end{enumerate}
\end{lemma}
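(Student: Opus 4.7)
The plan is to rewrite the gcd condition as an equation of truncated $p$-adic valuations: since $\gcd(m^r-1,p^\alpha) = p^{\min(\nu_p(m^r-1),\alpha)}$, the hypothesis of the lemma is equivalent to $\min(\nu_p(m^r-1),\alpha) = \min(\nu_p(n^r-1),\alpha)$ for every positive integer $r$. Both directions then reduce to plugging in the explicit formulas from Lemma \ref{lemmagcdp} together with the following elementary min-stability fact: if $u,v \geq 0$ satisfy $\min(u,\alpha) = \min(v,\alpha)$, then $\min(u+s,\alpha) = \min(v+s,\alpha)$ for every $s \geq 0$ (either both $u,v \geq \alpha$, or $u = v < \alpha$).

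For the forward implication I would first extract $\theta := o_p(m) = o_p(n)$ from the observation that $\{r : p \mid m^r-1\} = o_p(m)\,\mathbb{N}$, and the hypothesis forces this set to coincide for $m$ and $n$. The remaining conditions are then read off by specializing the gcd equation to distinguished values of $r$: case (ii) follows by taking $r = \theta$; case (iii) by taking $r = 1$; case (iv) by taking $r \in \{1,2\}$. In cases (iii) and (iv) I would also verify that the hypothesis automatically places $\nu_2(m-1)$ and $\nu_2(n-1)$ in the same branch: under $\alpha > 1$, the $r=1$ equality forces $\min(\nu_2(m-1),\alpha) = \min(\nu_2(n-1),\alpha)$, which in turn forces $\nu_2(n-1) > 1$ iff $\nu_2(m-1) > 1$, and $\nu_2(n-1) = 1$ iff $\nu_2(m-1) = 1$.

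For the backward implication I would substitute the formulas from Lemma \ref{lemmagcdp} into the truncated-valuation reformulation. In case (i) the equality of orders already suffices, since $\nu_p(m^r-1) \geq 1 = \alpha$ whenever $\theta \mid r$ and vanishes otherwise. In case (ii), for $\theta \nmid r$ both valuations are zero, and for $r = t\theta$ the min-stability fact applied with $u = \nu_p(m^\theta-1)$, $v = \nu_p(n^\theta-1)$, $s = \nu_p(t)$ transfers the hypothesized equality at $r = \theta$ to every multiple. Case (iii) is handled analogously with $\theta = 1$ and $u = \nu_2(m-1)$, $v = \nu_2(n-1)$. Case (iv) splits by the parity of $r$: odd $r$ is controlled by the $j=1$ equality, while even $r$ is controlled by the $j=2$ equality, again through min-stability applied with $s = \nu_2(r)-1$.

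The main technical obstacle is the $p=2$ asymmetry: the Lifting the Exponent formula changes shape according as $\nu_2(m-1) = 1$ or $\nu_2(m-1) > 1$, which is exactly why conditions (iii) and (iv) must be stated separately and why (iv) requires two test values $j \in \{1,2\}$ rather than one. The delicate part is verifying that the hypotheses automatically align $m$ and $n$ in the same branch and that these two test values already encode the behavior at every even $r$; everything else is bookkeeping around the min-stability principle.
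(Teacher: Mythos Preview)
Your proposal is correct and follows the same approach as the paper, which simply records that the lemma follows from Lemma~\ref{lemmagcdp}. You have spelled out in detail the bookkeeping that the paper leaves implicit: the reformulation via truncated $p$-adic valuations, the min-stability observation, and the branch alignment for $p=2$.
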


\begin{proof}
The result follows from {Lemma~\ref{lemmagcdp}}.
\end{proof}

We are now ready to state our first main result.

%t3.3 #&#
\begin{theorem}%
%%LEAP%%%\label{thm3.3}
\label{mainthm}
Suppose $q-\chi =p_{1}^{\alpha _{1}}\cdots p_{t}^{\alpha _{t}}$ is the
prime factorization of $q-\chi $, $m$ is coprime with $q-\chi $, and
$\theta _{i} = o_{p_{i}}(m)$. Then $(m,n) \in S^{q}_{\chi }$ if and only
if $n= m+k(q-\chi )/d$, where $d$ is a proper divisor of $q-\chi $ and
$k$ is an integer, and for each $p_{i}$ that divides $d$ the following
conditions hold:
%
%
%3 items enumerated
%the first item: i
%the last item: iii
% the (last) widest item: iii
%
\begin{enumerate}[{\normalfont (i)}]
\item $p_{i}\nmid n$ and $o_{p_{i}}(n) = \theta _{i}$,
\item $\gcd (m^{\theta _{i}}-1,p_{i}^{\alpha _{i}}) = \gcd (n^{\theta _{i}}-1,p_{i}^{
\alpha _{i}})$.
\item If $p_{i} = 2$, $\alpha _{i}>1$ and $\nu _{2}(m-1)=1$, then
$\gcd (m^{2}-1,2^{\alpha _{i}}) = \gcd (n^{2}-1,2^{\alpha _{i}})$.
\end{enumerate}
\end{theorem}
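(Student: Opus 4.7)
The plan is to reduce the problem to a prime-by-prime analysis via Proposition~\ref{conseq1} and the Chinese Remainder Theorem, and then to apply Lemma~\ref{mainprimelemma} to each prime power $p_i^{\alpha_i}$. By Proposition~\ref{conseq1}, the pair $(m,n)$ lies in $\mathcal{S}_{\chi}^q$ if and only if $\gcd(n,q-\chi)=1$ together with $\gcd(m^r-1,q-\chi)=\gcd(n^r-1,q-\chi)$ for every $r\geq 1$; the latter splits via CRT into the family of local equalities $\gcd(m^r-1,p_i^{\alpha_i})=\gcd(n^r-1,p_i^{\alpha_i})$, one for each $i$. For each $i$, Lemma~\ref{mainprimelemma} provides the iff criterion built from $o_{p_i}(m)=o_{p_i}(n)$ plus a case-specific gcd condition at $p_i^{\alpha_i}$. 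My task is to package these local criteria in the global form stated in the theorem.

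For the forward direction I set $d:=(q-\chi)/\gcd(n-m,q-\chi)$, which is a divisor of $q-\chi$ and allows me to write $n=m+k(q-\chi)/d$ for some integer $k$. Since $q$ is odd and both $m,n$ are coprime to $q-\chi$, they are both odd, so $n-m$ is even; together with $2\mid q-\chi$ this forces $\gcd(n-m,q-\chi)\geq 2$ and hence $d\leq(q-\chi)/2$, so $d$ is automatically a proper divisor. For each $p_i\mid d$, the three conditions of the theorem are then direct translations of the relevant case of Lemma~\ref{mainprimelemma}: (i) combines $p_i\nmid n$ (from $\gcd(n,q-\chi)=1$) with the equality of local orders; (ii) records the gcd equality at the exponent $\theta_i$ (noting that when $p_i=2$ we have $\theta_i=1$, and when $\alpha_i=1$ the equality is vacuous); and (iii) supplies the extra $j=2$ requirement from case (iv) of the lemma when $\nu_2(m-1)=1$.

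For the reverse direction I split the primes $p_i$ into two groups. If $p_i\nmid d$, then $p_i^{\alpha_i}\mid (q-\chi)/d$, so $p_i^{\alpha_i}\mid n-m$ and hence $m\equiv n\pmod{p_i^{\alpha_i}}$; this trivially yields $\gcd(m^r-1,p_i^{\alpha_i})=\gcd(n^r-1,p_i^{\alpha_i})$ for every $r$ and also propagates $\gcd(m,p_i)=1$ to $\gcd(n,p_i)=1$. If $p_i\mid d$, the hypothesized conditions (i)--(iii) match one of the four cases of Lemma~\ref{mainprimelemma} verbatim, giving the same gcd equality at $p_i^{\alpha_i}$. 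Reassembling via CRT delivers $\gcd(n,q-\chi)=1$ together with $\gcd(m^r-1,q-\chi)=\gcd(n^r-1,q-\chi)$ for all $r\geq 1$, and Proposition~\ref{conseq1} then yields $(m,n)\in\mathcal{S}_\chi^q$.

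The main obstacle I anticipate is the bookkeeping at the prime $2$, which is unavoidable because $q-\chi$ is always even. Lemma~\ref{mainprimelemma} splits into the two subcases $\nu_2(m-1)=1$ and $\nu_2(m-1)>1$, and this dichotomy is exactly why condition (iii) of the theorem is stated conditionally rather than merged into (ii). I will need to check that the gcd hypothesis in (ii) already forces $\nu_2(m-1)=\nu_2(n-1)$ when $\alpha_i\geq 2$, so that the split matches symmetrically in $m$ and $n$ and the case analysis closes without gaps.
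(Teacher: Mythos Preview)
Your proposal is correct and follows essentially the same approach as the paper's proof: both reduce via Proposition~\ref{conseq1} and the multiplicativity of $\gcd(\cdot,q-\chi)$ to the prime-by-prime criterion of Lemma~\ref{mainprimelemma}, split the primes according to whether $p_i\mid d$, and handle the $p=2$ bookkeeping exactly as you describe. Your argument that $d$ is proper (via $2\mid n-m$ since $m,n$ are both odd) is a minor variant of the paper's observation that $\gcd(m-1,q-\chi)=\gcd(n-1,q-\chi)\geq 2$ divides $n-m$, but the two are equivalent here.
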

\begin{proof}
Suppose $(m,n) \in S_{\chi }^{q}$. Then $\gcd (n,q-\chi )=1$, and in particular
$p_{i} \nmid n$. Write $\gcd (n-m, q-\chi ) = (q-\chi )/d$. Since
$n-m = (n-1)-(m-1)$, it follows that $n-m$ is divisible by
$\gcd (m-1,q-\chi ) = \gcd (n-1,q-\chi ) \geq 2$, hence $d$ is a proper
divisor of $q-\chi $. Thus $n = m + k(q-\chi )/d$ for some integer
$k$. Since $\gcd (m^{r}-1,q-\chi ) = \gcd (n^{r}-1,q-\chi )$ for any positive
integer $r$, we obtain that
$\gcd (m^{r}-1,p_{i}^{\alpha _{i}}) = \gcd (n^{r}-1,p_{i}^{\alpha _{i}})$,
hence the remaining conditions must hold.

Conversely, assume $n=m+k(q-\chi )/d$ with $d$ satisfying the conditions.
We need to show that $\gcd (m^{r}-1,q-\chi )=\gcd (n^{r}-1,q-\chi )$ for
any positive integer $r$. If $p_{i}\nmid d$, then
$p_{i}^{\alpha _{i}} \mid k(q-\chi )/d$, which implies that
$\gcd (m^{r}-1,p_{i}^{\alpha _{i}}) = \gcd (n^{r}-1,p_{i}^{\alpha _{i}})$.
Since
\begin{equation*}
\gcd (u,q-\chi ) = \prod _{i=1}^{t} \gcd (u,p_{i}^{\alpha _{i}}),%
\end{equation*}
it remains to show that
\begin{equation*}
\label{gcdpalpha2}
\gcd (m^{r}-1,p_{i}^{\alpha _{i}}) = \gcd ( n^{r} -1, p_{i}^{\alpha _{i}})%
\end{equation*}
for any positive integer $r$ and each $i$ such that $p_{i} \mid d$, which
follows from {Lemma~\ref{mainprimelemma}}.
\end{proof}

%r3.4 #&#
\begin{remark}%
%%LEAP%%%\label{rem3.4}
\label{remark2}
When $\alpha _{i}=1$, Condition (ii) is always satisfied and Condition
(iii) does not apply. When $\nu _{p_{i}}(d)< \alpha _{i}$, Condition (i)
is always satisfied.
\end{remark}

%r3.5 #&#
\begin{remark}
\label{rem3.5}
When the divisor $d$ in {Theorem~\ref{mainthm}} is $1$, it follows that
$m \equiv n \pmod{q-\chi }$, hence $R_{m,a}$ and $R_{n,a}$ induce the same
permutation.
\end{remark}

%r3.6 #&#
\begin{remark}%
%%LEAP%%%\label{rem3.6}
\label{remark1}
Assume $1<m<n<q-\chi $. Then $0<n-m<q-\chi $. By taking a proper divisor
$d$ of $q-\chi $ such that $\gcd (n-m,q-\chi ) = (q-\chi )/d$, it follows
that $n = m + k(q-\chi )/d$ with $0<k<d$ and $\gcd (k,d)=1$.
\end{remark}

%r3.7 #&#
\begin{remark}%
%%LEAP%%%\label{rem3.7}
\label{remark3}
When $\nu _{2}(q-\chi )=1$, we observe that
$\gcd (m+(q-\chi )/2,q-\chi )\neq 1$, so $R_{m + (q-\chi )/2,a}$ does not
permute $\mathbb{P}^{1}(\mathbb{F}_{q})$ for any $a\in \mathbb{F}_{q}$ with
$\chi (a)=\chi $. Suppose $\alpha =\nu _{2}(q-\chi )>1$ and
$\gcd (m,q-\chi )=1$. Then $(m,m + (q-\chi )/2)\in S_{\chi }^{q}$ if and
only if $m \not\equiv 1 \pmod{2^{\alpha -1}}$.
\end{remark}

%e3.8 #&#
\begin{example}%
%%LEAP%%%\label{exmp3.8}
\label{49}
Let $q=49$. We apply {Theorem~\ref{mainthm}} to find all R\'{e}dei permutations
with the same cycle structures for a fixed quadratic character
$\chi $. We only consider $R_{m,a}$ with $1<m<q-\chi $.

First, let $\chi =-1$. Then $q-\chi =50 = 2\cdot 5^{2}$. There are
$\varphi (50) = 20$ values of $m$ for which $R_{m,a}$ is a permutation.
Disregarding $m=1$, which gives the identity map, those values are
$3$, $7$, $9$, $11$, $13$, $17$, $19$, $21$, $23$, $27$, $29$, $31$,
$33$, $37$, $39$, $41$, $43$, $47$, and $49$. By {Remark~\ref{remark1}},
for each $m$ we consider $m+k\cdot 50/d$, with $d \in \{2,5,10,25\}$,
$0<k<d$ and $\gcd (k,d)=1$. Note that $50/d$ is odd for any even divisor
$d$, hence $m+k\cdot 50/d$ is even, and so it does not yield a permutation.
We observe that in this case Condition (i) in {Theorem~\ref{mainthm}} is
violated by any such $d$. Thus, it is enough to consider
$d \in \{5,25\}$.

We start with $m=3$. We have $o_{5}(3)=4$ and $\gcd (3^{4}-1,25)=5$. When
$d=5$, {Remark~\ref{remark2}} implies that we only need to check Condition
(ii) with $p_{i}=5$. When $d=25$, we check Conditions (i) and (ii) for
$p_{i}=5$. The calculations corresponding to each possible value of
$k$ for $d=5$ and $25$ are in {Tables~\ref{tabela1exemplo} and \ref{tabela2exemplo}}, respectively. The result is that the permutations
$R_{m,a}$ with $m\in \{3,13,17,23,27,33,37,47\}$ and $\chi (a)=-1$ have
the same cycle structure.

\begin{table}
 \begin{tabular}{|c c c||c c c|} 
 \hline
 $k$ & $n$ & $\gcd(n^4-1,25)$ & $k$ & $n$ & $\gcd(n^4-1,25)$\\
 \hline\hline
 1 & 13 & 5 &  3 & 33 & 5 \\ 
 \hline
 2 & 23 & 5 & 4 & 43 & 25\\
 \hline
\end{tabular}
\caption{Calculations for $d=5$ and each possible value of $k$.}\label{tabela1exemplo}
\end{table}

\begin{table}
 \begin{tabular}{|c c c c c||c c c c c|} 
 \hline
 $k$ & $n$ & $5\nmid n?$ & $o_5(n)$ & $\gcd(n^4-1,25)$ & $k$ & $n$ & $5\nmid n?$ & $o_5(n)$ & $\gcd(n^4-1,25)$\\
 \hline\hline
 1 & 5 & no & & & 13 & 29 & yes & 2 & \\ 
 \hline
 2 & 7 & yes & 4 & 25 & 14 & 31 & yes & 1 &  \\
 \hline
 3 & 9 & yes & 2 & &  16 & 35 & no &  & \\
 \hline
 4 & 11 & yes & 1 & & 17 & 37 & yes & 4 & 5\\
 \hline
 6 & 15 & no &  & & 18 & 39 & yes & 2 & \\
 \hline
 7 & 17 & yes & 4 & 5 & 19 & 41 & yes & 1 &\\
 \hline
 8 & 19 & yes & 2 & & 21 & 45 & no &  & \\
 \hline
 9 & 21 & yes & 1 & & 22 & 47 & yes & 4 & 5\\
 \hline
 11 & 25 & no &  & & 23 & 49 & yes & 2 & \\
 \hline
 12 & 27 & yes & 4 & 5 & 24 & 1 & yes & 1 & \\
 \hline
\end{tabular}
\caption{Calculations for $d=25$ and each possible value of $k$.}\label{tabela2exemplo}
\end{table}

Next we consider $m=7$, with $o_{5}(7)=4$. Since the values of
$7+k\cdot 50/d$ for $d=5$ are $17,27,37$, and $47$, and the permutations
given by these values have the same cycle structure than the one induced
by $m=3$, we can skip $d=5$. For $d=25$, the only value of $n$ for which
Conditions (i) and (ii) are satisfied is $n=43$ with $k=18$. Since
$o_{5}(7)=o_{5}(3)$, no further calculations are necessary, as the orders
and the gcds are in the tables.

For $m=9$, we have $o_{5}(9)=2$ and $\gcd (9^{2}-1,25)=5$, and we first
consider $d=5$. The values of $n=9+k\cdot 50/d$ are $19,29,39$, and
$49$, and we have $\gcd (n^{2}-1,25)=5$ for $n=19,29,39$, and
$\gcd (49^{2}-1,25)=25$. We observe that these are the only values of
$n$ for which the order modulo $5$ is $2$, thus we may disregard
$d=25$, and we conclude that the R\'{e}dei permutations with
$m \in \{9,19,29,39\}$ and $\chi =-1$ have the same cycle structure.

For $m=11$, we have $o_{5}(11)=1$ and $\gcd (11-1,25) =5$. Similarly to
the case $m=9$, for $d=5$ we obtain three values of $n$ for which the cycle
structure is the same as $R_{11,a}$, namely $n\in \{21,31,41\}$, and
$d=25$ gives no other value.

Having exhausted all the possibilities, we conclude that there are five
distinct cycle structures among the nontrivial R\'{e}dei permutations over
$\mathbb{P}^{1}(\mathbb{F}_{49})$ with $\chi =-1$, given by $m$ in the
following sets: $\{3,13,17,23,27,33,37,47\}$, $\{7,43\}$,
$\{9,19,29,39\}$, $\{11,21,31,41\}$, and $\{49\}$.

Now we consider $\chi =1$. In this case, $q-\chi =48=2^{4}\cdot 3$. The
values of $m$ for which $R_{m,a}$ is a nontrivial permutation are
$5$, $7$, $11$, $13$, $17$, $19$, $23$, $25$, $29$, $31$, $35$, $37$,
$41$, $43$, and $47$. For $d=2$, by {Remark~\ref{remark3}} we have
$(m,m+24) \in S^{49}_{1}$ if and only if $m \not\equiv 1 \pmod{8}$. Thus
$(5,29)$, $(7,31)$, $(11,35)$, $(13,37)$, $(19,43)$, and
$(23,47) \in S^{49}_{1}$. The other divisors of $q-\chi =48$ do not produce
any other pair. Therefore there are nine distinct cycle structures among
the nontrivial R\'{e}dei permutations with $\chi =1$.

We summarize our findings in {Table~\ref{tabela2ciclo}} by listing all R\'{e}dei
permutations over $\mathbb{P}^{1}(\mathbb{F}_{49})$ along with their cycle
structures. Note that the permutations with a unique cycle structure only
have cycles of length $1$ or $2$. This is no coincidence, as we show later
in this section ({Corollary~\ref{coroisolated}}).

We observe that for $q=47$ and $\chi =-1$ the values of $m$ for which
$R_{m,a}$ is a permutation and those with identical cycle structure are
the same as for $q=49$ and $\chi =1$, since $q-\chi =48$ in both cases.
In other words, $S_{-1}^{47}=S_{1}^{49}$. However, the permutations on
$\mathbb{P}^{1}(\mathbb{F}_{47})$ have two fixed points less than the corresponding
ones on $\mathbb{P}^{1}(\mathbb{F}_{49})$.\quad$\diamond $

\begin{table}
 \begin{tabular}{|l|l| l |} 
 \hline
 $\chi(a)$ & $m$ & Cycle Structure\\
 \hline\hline
$-1$ & $1$ &  $50 \times \{\bullet\}$ \\ 
\hline
$-1$ & $3,13,17,23,27,33,37,47$ &  $\left(2 \times\{\bullet\}\right) \oplus \left(2\times \mathrm{Cyc}(4)\right) \oplus \left(2\times  \mathrm{Cyc}(20)\right)$\\ 
\hline
$-1$ & $7,43$ &  $\left(2 \times \{\bullet\}\right) \oplus \left(12\times \mathrm{Cyc}(4)\right)$\\ 
\hline
$-1$ & $9,19,29,39$ & $ \left(2 \times \{\bullet\}\right) \oplus \left(4\times \mathrm{Cyc}(2)\right) \oplus \left(4\times  \mathrm{Cyc}(10)\right)$\\ 
\hline
$-1$ & $11,21,31,41$ &  $\left(10 \times \{\bullet\}\right) \oplus \left(8\times \mathrm{Cyc}(5)\right)$\\ 
\hline
$-1$ & $49$ &  $\left(2 \times \{\bullet\}\right) \oplus \left(24\times \mathrm{Cyc}(2)\right)$\\ 
\hline
$1$ & $1$ &  $50 \times \{\bullet\}$ \\ 
\hline
$1$ & $5,29$ &  $\left(6 \times \{\bullet\}\right) \oplus \left(10\times \mathrm{Cyc}(2)\right) \oplus \left(6\times \mathrm{Cyc}(4)\right)$\\
\hline
$1$ & $7,31$ & $\left(8 \times \{\bullet\}\right) \oplus \left(21\times \mathrm{Cyc}(2)\right)$\\
\hline
$1$ & $11,35$ & $\left(4 \times \{\bullet\}\right) \oplus \left(11\times \mathrm{Cyc}(2)\right) \oplus \left(6\times \mathrm{Cyc}(4)\right)$\\
\hline
$1$ & $13,37$& $\left(14 \times \{\bullet\}\right) \oplus \left(6\times \mathrm{Cyc}(2)\right) \oplus \left(6\times \mathrm{Cyc}(4)\right)$\\
\hline
$1$ & $17$& $\left(18 \times \{\bullet\}\right) \oplus \left(16\times \mathrm{Cyc}(2)\right)$\\
\hline
$1$ & $19,43$ &$\left(8 \times \{\bullet\}\right) \oplus \left(9\times \mathrm{Cyc}(2)\right) \oplus \left(6\times \mathrm{Cyc}(4)\right)$\\
\hline
$1$ & $23,47$& $\left(4 \times \{\bullet\}\right) \oplus \left(23\times \mathrm{Cyc}(2)\right)$\\
\hline
$1$ & $25$& $\left(26 \times \{\bullet\}\right) \oplus \left(12\times \mathrm{Cyc}(2)\right)$\\
\hline
$1$ & $41$& $\left(10 \times \{\bullet\}\right) \oplus \left(20\times \mathrm{Cyc}(2)\right)$\\
\hline
 \end{tabular}
 \caption{All R\'edei permutations $R_{m,a}$ over $\mathbb{P}^1\left(\mathbb F_{49}\right)$, with  $1\leq m <49-\chi(a)$, grouped by their cycle structures.}\label{tabela2ciclo}
 \end{table}
\end{example}

The next result shows that if $\rho :=o_{q-\chi }(m) > 2$, then the pair
$(m,m^{\ell})$ is in $S_{\chi}^q$ for any $\ell$ coprime with $\rho$. We observe that if $\rho>2$ and $1< \ell<\rho$, then $m^{\ell} \not\equiv m \pmod{q-\chi}$, thus $R_{m,a}$ and $R_{m^{\ell},a}$ are distinct R\'{e}dei permutations with the same cycle structure. In particular, $R_{m^{\rho-1},a}$ is the inverse of $R_{m,a}$.

%p3.9 #&#
\begin{proposition}%
%%LEAP%%%\label{prop3.9}
\label{isolated}
If $\rho =o_{q-\chi }(m)>2$, then $(m,m^{\ell}) \in S_{\chi}^q$ for any integer $\ell$ coprime with $\rho$.
\end{proposition}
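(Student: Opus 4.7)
The plan is to reduce to Proposition~\ref{conseq1} and then exploit the fact that $m^{\rho-1}$ is the multiplicative inverse of $m$ modulo $q-\chi$.

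First I would verify that $R_{m^{\rho-1},a}$ is indeed a permutation of $\mathbb P^1(\mathbb F_q)$: since $\gcd(m,q-\chi)=1$, clearly $\gcd(m^{\rho-1},q-\chi)=1$, so Proposition~\ref{fixpt}(\ref{decomp}) applies. By Proposition~\ref{conseq1}, it then suffices to show that
\[\gcd(m^r-1,q-\chi) = \gcd\bigl((m^{\rho-1})^r-1,q-\chi\bigr)\]
for every positive integer $r$. Since $\rho=o_{q-\chi}(m)$, we have $m\cdot m^{\rho-1}\equiv 1\pmod{q-\chi}$, and hence $m^r\cdot (m^{\rho-1})^r\equiv 1\pmod{q-\chi}$ for every $r$.

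The core of the argument is the following elementary observation. Set $N=q-\chi$, $x=m^r$, $y=(m^{\rho-1})^r$, so that $xy\equiv 1\pmod{N}$, and both $x,y$ are coprime with $N$. Writing $xy=1+kN$ for some integer $k$ gives
\[x-1 \;=\; x-xy+kN \;=\; -x(y-1)+kN,\]
so $\gcd(x-1,N)=\gcd(x(y-1),N)=\gcd(y-1,N)$, the last equality because $\gcd(x,N)=1$. Applying this with the $x,y$ above yields the required identity of gcds for every $r$, and Proposition~\ref{conseq1} then gives $(m,m^{\rho-1})\in S_\chi^q$.

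There is no real obstacle here: the entire content is the little algebraic identity $x-1\equiv -x(y-1)\pmod N$ when $xy\equiv 1\pmod N$, which makes gcds of "$x-1$" and "$x^{-1}-1$" coincide modulo $N$. The only thing to be slightly careful about is that Proposition~\ref{conseq1} requires the gcds to agree for \emph{all} $r$, not just $r=1$, but the identity above is uniform in $r$ since $(m^{\rho-1})^r\equiv m^{-r}\pmod{q-\chi}$ for every $r$.
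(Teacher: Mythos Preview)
Your proof is correct, and it takes a genuinely different route from the paper's own argument. The paper proceeds prime by prime: for each prime $p\mid q-\chi$ with $\alpha=\nu_p(q-\chi)$ and $\theta=o_p(m)$, it first observes that $\theta\mid\rho$ forces $\gcd(\theta,\rho-1)=1$ so that $o_p(m^{\rho-1})=\theta$, and then invokes Lemma~\ref{lemmagcdp} (a Lifting-the-Exponent computation) together with a binomial expansion of $(m^\theta)^{\rho-1}$ to match $\nu_p((m^{\rho-1})^\theta-1)$ with $\nu_p(m^\theta-1)$, handling the case $p\mid \rho-1$ separately. Your argument bypasses all of this by noting that $m^{\rho-1}\equiv m^{-1}\pmod{q-\chi}$ and using the one-line identity $x-1\equiv -x(y-1)\pmod N$ for $xy\equiv 1\pmod N$, which immediately gives $\gcd(m^r-1,q-\chi)=\gcd((m^{\rho-1})^r-1,q-\chi)$ for all $r$ at once. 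In fact your observation is tantamount to saying that $R_{m^{\rho-1},a}$ is the inverse of $R_{m,a}$ as a permutation of $\mathbb P^1(\mathbb F_q)$, and any permutation has the same cycle structure as its inverse; this is both more elementary and more conceptual than the paper's prime-by-prime verification, though the paper's approach has the virtue of illustrating how Theorem~\ref{mainthm} and Lemma~\ref{mainprimelemma} are meant to be applied in practice.
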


\begin{proof}
Let $p$ be a prime divisor of $q-\chi $ and
$\alpha =\nu _{p}(q-\chi )$, and let $\theta =o_{p}(m)$. Since
$\theta $ and $\ell$ are coprime, it follows that
$o_{p}(m^{\ell}) = \theta $. By {Lemma~\ref{lemmagcdp}}, it suffices to
show that
$\gcd ((m^{\ell})^{r}-1, p^{\alpha }) = \gcd (m^{r}-1, p^{\alpha })$ for
$r=\theta $; in addition, when $p=2$, the identity also needs to be verified
for $r=2$.

Let $\beta = \nu _{p}(m^{\theta }-1)$. Write
$m^{\theta } -1=kp^{\beta }$, for some integer $k$ with $p\nmid k$. Then
\begin{align*}
(m^{\ell})^{\theta } & = (1+kp^{\beta })^{\ell}
\\
& = \sum _{i=0}^{\ell}\binom{\ell}{i} k^{i}p^{i\beta }
\\
& = 1+p^{\beta }\left ( \ell k+ \sum _{i=2}^{\ell}
\binom{\ell}{i}k^{i}p^{(i-1)\beta } \right ).
\end{align*}
Clearly, $p$ divides the summation in the last expression. If
$p \nmid \ell$, then
$\nu _{p}((m^{\ell})^{\theta }-1) = \beta $. If $p\mid \ell$, then
$p\nmid \rho $, which implies that $o_{p^{\alpha }}(m)=\theta $. Hence
$\alpha \leq \beta \leq \nu _{p}((m^{\ell})^{\theta }-1)$. In both cases
we conclude that
$\gcd ((m^{\ell})^{\theta }-1, p^{\alpha }) = \gcd (m^{\theta }-1, p^{
\alpha })$. When $p=2$, the same reasoning shows that
$\gcd ((m^{\ell})^{2}-1, p^{\alpha }) = \gcd (m^{2}-1, p^{\alpha })$.
\end{proof}

We say that a R\'{e}dei permutation $R_{m,a}$ is \emph{isolated} if
$m \equiv n \pmod{q-\chi }$ for all pairs $(m,n) \in S_{\chi }^{q}$, that
is, there is no distinct R\'{e}dei permutation with the same cycle structure.
{Proposition~\ref{isolated}} shows that if $R_{m,a}$ is isolated, then
$o_{q-\chi }(m)$ is either $1$ or $2$, in which cases $R_{m,a}$ is the identity
map or an involution, respectively. An involution is a permutation that
only decomposes into cycles of length $1$ or $2$. It follows that
$R_{m,a}$ is an involution if and only if
$m^{2}\equiv 1 \pmod{q-\chi }$. This shows our second main result.

%c3.10 #&#
\begin{corollary}%
%%LEAP%%%\label{cor3.10}
\label{coroisolated}
The only isolated R\'{e}dei permutations are the isolated R\'{e}dei involutions.
\end{corollary}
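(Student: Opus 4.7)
The proof is almost immediate from Proposition~\ref{isolated}, so the plan is to deploy that proposition and then handle the small residual cases. Concretely, I would argue by contrapositive: suppose $R_{m,a}$ is a R\'edei permutation that is \emph{not} an involution, and show it is not isolated.

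Let $\rho = o_{q-\chi}(m)$. If $R_{m,a}$ is not an involution, then $m^2 \not\equiv 1 \pmod{q-\chi}$, so $\rho \notin \{1,2\}$, i.e., $\rho > 2$. Proposition~\ref{isolated} then yields $(m, m^{\rho-1}) \in S_{\chi}^q$. Since $\rho > 2$ and $\rho$ is the multiplicative order of $m$ modulo $q-\chi$, the residues $m, m^2, \ldots, m^{\rho-1}$ are pairwise distinct mod $q-\chi$, so in particular $m^{\rho-1} \not\equiv m \pmod{q-\chi}$. Therefore $R_{m^{\rho-1},a}$ is a R\'edei permutation distinct from $R_{m,a}$ sharing the same cycle structure, which contradicts the hypothesis that $R_{m,a}$ is isolated.

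Hence an isolated R\'edei permutation must satisfy $m^2 \equiv 1 \pmod{q-\chi}$, i.e., it is an involution (the case $\rho = 1$ giving the identity, which is itself a degenerate involution). The reverse direction, that an isolated R\'edei involution is an isolated R\'edei permutation, is a tautology from the definition of isolated.

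There is no real obstacle here; the substance of the statement is entirely absorbed by Proposition~\ref{isolated}. The only minor point to flag is the need to verify $m^{\rho-1} \not\equiv m \pmod{q-\chi}$ when $\rho > 2$, which is noted in the paragraph preceding Proposition~\ref{isolated} and follows from the definition of multiplicative order.
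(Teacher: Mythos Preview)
Your proposal is correct and takes essentially the same approach as the paper: the paper's argument, contained in the paragraph immediately preceding the corollary, also invokes Proposition~\ref{isolated} to conclude that an isolated $R_{m,a}$ must have $o_{q-\chi}(m)\in\{1,2\}$, hence be the identity or an involution. Your contrapositive formulation and explicit verification that $m^{\rho-1}\not\equiv m\pmod{q-\chi}$ when $\rho>2$ simply spell out what the paper records more tersely.
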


If $R_{m,a}$ is an involution, then it may or may not be isolated. In
\cite{MR4171318} the authors give a complete description of R\'{e}dei involutions
and their cycle structures. In this case, a cycle structure is completely
determined by the number of fixed points. It turns out that a R\'{e}dei involution
either is isolated or has a cycle structure that is shared by exactly one
other R\'{e}dei involution. The next result follows from Propositions 4 and
5 in~\cite{MR4171318}. It establishes when a R\'{e}dei involution with
a prescribed cycle structure exists, and if it does, it gives the values
of $m$ explicitly.

%p3.11 #&#
\begin{proposition}%
%%LEAP%%%\label{prop3.11}
\label{involutions}
Let
$q-\chi =2^{\alpha _{0}}p_{1}^{\alpha _{1}}\cdots p_{r}^{\alpha _{r}}$
be the prime factorization of $q-\chi $, and
$d=2^{\beta _{0}}p_{1}^{\beta _{1}}\cdots p_{r}^{\beta _{r}}$ be a proper
divisor of $q-\chi $. Then there exists a R\'{e}dei involution
$R_{m,a}$ with $d+\chi +1$ fixed points over
$\mathbb{P}^{1}(\mathbb{F}_{q})$ if and only if
$\beta _{i}\in \{0,\alpha _{i}\}$ for $1\leq i\leq r$ and one of the following
situations occurs:
%
%
%2 items enumerated
%the first item: i
%the last item: ii
% the (last) widest item: ii
%
\begin{enumerate}[{\normalfont (i)}]
\item $\beta _{0}\in \{\alpha _{0}-1,\alpha _{0}\}$ and
$\beta _{0}\geq 1$. In this case, $R_{m,a}$ is isolated and
$m \equiv k(q-\chi )/d-1 \pmod{q-\chi }$ for
\begin{equation*}
k =
\begin{cases}
\left ( \dfrac{q-\chi }{2d} \right )^{\varphi (d)-1} + \dfrac{d}{2} &
\text{ if } \beta _{0}=\alpha _{0}-1
\\
2\left ( \dfrac{q-\chi }{d} \right )^{\varphi (d)-1} & \text{ if }
\beta _{0}=\alpha _{0}
\end{cases}
\end{equation*}
with $k$ reduced modulo $d$.
\item $\alpha _{0}\geq 3$ and $\beta _{0}=1$. In this case,
$(m,n)\in S_{\chi }^{q}$ where
$m\text{ or }n \equiv k(q-\chi )/d-1 \pmod{q-\chi }$ for
\begin{equation*}
k =\left (\dfrac{q-\chi }{2d}\right )^{\varphi (d)-1}
\end{equation*}
with $k$ reduced modulo $d$, and
$m\equiv n+(q-\chi )/2 \pmod{q-\chi }$. Moreover, if
$(m,\ell )\in S_{\chi }^{q}$, then $\ell \equiv m$ or
$n\pmod{q-\chi }$.
\end{enumerate}
\end{proposition}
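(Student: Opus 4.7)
The plan is to reduce the statement to the problem of classifying solutions of $m^2\equiv 1\pmod{q-\chi}$ by their value of $\gcd(m-1,q-\chi)$, and then to invoke Propositions~4 and~5 of~\cite{MR4171318}. By Proposition~\ref{fixpt}(ii), an involution $R_{m,a}$ has exactly $d+\chi+1$ fixed points precisely when $\gcd(m-1,q-\chi)=d$, and $R_{m,a}$ is an involution precisely when $m^2\equiv 1\pmod{q-\chi}$. So I first rephrase the question as: for which divisors $d$ of $q-\chi$ is the system ``$m^2\equiv1$ and $\gcd(m-1,q-\chi)=d$'' solvable, and how many solutions does it have modulo $q-\chi$?

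Next I would split by the Chinese Remainder Theorem using $q-\chi=2^{\alpha_0}p_1^{\alpha_1}\cdots p_r^{\alpha_r}$. For each odd $p_i$ the solutions of $m^2\equiv 1\pmod{p_i^{\alpha_i}}$ are exactly $m\equiv\pm 1$, so $\gcd(m-1,p_i^{\alpha_i})\in\{1,p_i^{\alpha_i}\}$ and hence $\beta_i\in\{0,\alpha_i\}$; in either case the local class of $m$ is determined. For $p=2$ I would enumerate the solutions of $m^2\equiv 1\pmod{2^{\alpha_0}}$, obtaining one class when $\alpha_0=1$, two classes ($\pm 1$) when $\alpha_0=2$, and four classes ($\pm 1,\pm 1+2^{\alpha_0-1}$) when $\alpha_0\ge 3$. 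A direct computation of $\nu_2(m-1)$ in each case shows that the possible values of $\beta_0$ are $\alpha_0$ and $\alpha_0-1$ (each realized by a unique class), plus $\beta_0=1$ when $\alpha_0\ge 3$, which is realized by \emph{two} classes differing by $2^{\alpha_0-1}$. CRT reassembly together with the odd-part uniqueness then gives case~(i), in which $m$ is determined modulo $q-\chi$ and $R_{m,a}$ is isolated, and case~(ii), in which two values $m,n$ with $n\equiv m+(q-\chi)/2\pmod{q-\chi}$ share the same cycle structure.

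Finally I would derive the explicit formula $m\equiv k(q-\chi)/d-1\pmod{q-\chi}$. Writing $m+1\equiv k(q-\chi)/d\pmod{q-\chi}$ with $\gcd(k,d)=1$ captures exactly the condition $(q-\chi)/d\mid m+1$ with $\gcd(m+1,q-\chi)=(q-\chi)/d$, which together with $m\equiv 1$ modulo an appropriate cofactor $e$ (namely $e=d$ when $\beta_0=\alpha_0$ and $e=2d$ otherwise) forces $k$ to satisfy a single linear congruence modulo $d$ or $d/2$. The closed forms $k=((q-\chi)/d)^{\varphi(d)-1}$ and $k=((q-\chi)/(2d))^{\varphi(d)-1}+d/2$ then come from Euler's theorem applied to invert $(q-\chi)/d$ modulo $d$. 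The main obstacle is the 2-adic bookkeeping in case~(ii), in particular the uniqueness claim ``if $(m,\ell)\in S_\chi^q$ then $\ell\equiv m$ or $n\pmod{q-\chi}$'': this amounts to showing that among the four solutions modulo $2^{\alpha_0}$ only the two with $\nu_2(m-1)=1$ produce the chosen value of $d$. Since all of this bookkeeping is carried out in Propositions~4 and~5 of~\cite{MR4171318}, the body of the proof reduces to a citation of those results together with the translation of notation described above.
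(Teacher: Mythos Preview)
Your proposal is essentially what the paper does: it states that Proposition~\ref{involutions} ``follows from Propositions~4 and~5 in~\cite{MR4171318}'' with no further argument, and your plan reduces to exactly that citation after translating ``$d+\chi+1$ fixed points'' to ``$\gcd(m-1,q-\chi)=d$'' via Proposition~\ref{fixpt}\eqref{pontosfixos}. One small caveat on your sketch of the formula derivation: the claim that $\gcd(m+1,q-\chi)=(q-\chi)/d$ (and that $\gcd(k,d)=1$) is not literally correct in every subcase---for instance, in case~(ii) with $m\equiv -1\pmod{2^{\alpha_0}}$ one has $\nu_2(m+1)\ge\alpha_0$ rather than $\alpha_0-1$, and in case~(i) with $\beta_0=\alpha_0$ the displayed $k$ is even while $d$ is even---but since you explicitly defer the $2$-adic bookkeeping to \cite{MR4171318}, this does not undermine the plan.
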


As a consequence of {Corollary~\ref{coroisolated}} and {Proposition~\ref{involutions}}
we have the following.

%p3.12 #&#
\begin{proposition}
\label{prop3.12}
Let
$q-\chi =2^{\alpha _{0}}p_{1}^{\alpha _{1}}\cdots p_{r}^{\alpha _{r}}$
be the prime factorization of $q-\chi $ and $M$ be the number of isolated
R\'{e}dei permutations over $\mathbb{P}^{1}(\mathbb{F}_{q})$. Then
\begin{equation*}
M=
\begin{cases}
2^{r}&\text{ if }\alpha _{0}=1
\\
2^{r+1}&\text{ otherwise. }
\end{cases}
\end{equation*}
\end{proposition}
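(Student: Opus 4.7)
The plan is to apply Corollary~\ref{coroisolated} to reduce to counting isolated Rédei involutions, then use Proposition~\ref{involutions} to enumerate these by their divisors $d:=\gcd(m-1,q-\chi)$, keeping careful track of the identity separately.

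By Corollary~\ref{coroisolated}, isolated Rédei permutations are exactly isolated Rédei involutions. The identity $R_{1,a}$ is trivially an involution, and it is isolated because its cycle structure $(q+1)\times\{\bullet\}$ forces any Rédei permutation sharing it to satisfy $m\equiv 1\pmod{q-\chi}$. Every other isolated involution $R_{m,a}$ has $d=\gcd(m-1,q-\chi)$ as a proper divisor of $q-\chi$, so it falls under Proposition~\ref{involutions}; case (i) there produces the isolated involutions, while case (ii) produces matched pairs that share a cycle structure and hence are never isolated. Since distinct values of $d$ yield distinct fixed-point counts $d+\chi+1$, and case (i) determines $m$ uniquely modulo $q-\chi$, the non-identity isolated involutions are in bijection with the proper divisors $d$ of $q-\chi$ meeting the conditions of case (i).

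Writing $d=2^{\beta_0}p_1^{\beta_1}\cdots p_r^{\beta_r}$, those conditions require $\beta_i\in\{0,\alpha_i\}$ for $1\le i\le r$ together with $\beta_0\in\{\alpha_0-1,\alpha_0\}$ and $\beta_0\ge 1$. When $\alpha_0=1$ only $\beta_0=1$ qualifies, giving $2^r$ candidate divisors; discarding the one non-proper divisor $d=q-\chi$ (obtained when every $\beta_i=\alpha_i$) leaves $2^r-1$ non-identity isolated involutions. When $\alpha_0\ge 2$ both $\alpha_0-1$ and $\alpha_0$ satisfy $\beta_0\ge 1$, so there are $2\cdot 2^r$ candidates, and again removing the single non-proper $d=q-\chi$ leaves $2^{r+1}-1$. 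Adding the identity contribution in each case yields $M=2^r$ when $\alpha_0=1$ and $M=2^{r+1}$ when $\alpha_0\ge 2$, matching the claim.

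The only genuine obstacle is bookkeeping: one must confirm that the correspondence between admissible $d$ and isolated involution is clean (which follows because the fixed-point count $d+\chi+1$ distinguishes them and case (i) of Proposition~\ref{involutions} yields a unique $m$ modulo $q-\chi$), and that exactly the single divisor $d=q-\chi$ must be dropped for non-properness, which is then reinstated through the identity $R_{1,a}$.
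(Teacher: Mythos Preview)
Your proof is correct and is precisely the argument the paper intends: the paper states the proposition as an immediate consequence of Corollary~\ref{coroisolated} and Proposition~\ref{involutions} without writing out the details, and your counting of the admissible divisors $d$ in case~(i) of Proposition~\ref{involutions}, with the identity handled separately to account for the excluded non-proper divisor $d=q-\chi$, is exactly how those details go.
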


%s4 #&#
\section{Symmetries in $S_{\chi }^{q}$}
%%LEAP%%%\label{sec4}
\label{meio}

In this section we present alternative ways for generating pairs in
$S_{\chi }^{q}$. We also show the relationship between several pairs in
this set. Some results guarantee the existence of a pair based on the existence
of another pair. These results shed light on the structure of
$S_{\chi }^{q}$, particularly on the distribution of the pairs, as we show
in {Example~\ref{continuacao}}.

%p4.1 #&#
\begin{proposition}%
%%LEAP%%%\label{prop4.1}
\label{propsym}
Suppose $q-\chi = p_{1}^{\alpha _{1}}\cdots p_{r}^{\alpha _{r}}$ is the
prime factorization of $q-\chi $ and $d$ is a proper divisor of
$q-\chi $. If $(m,n) \in S^{q}_{\chi }$, then
$(m+k(q-\chi )/d,n+k(q-\chi )/d) \in S^{q}_{\chi }$ if and only if for each
prime divisor $p_{i}$ of $d$ the following conditions hold:
%
%
%3 items enumerated
%the first item: i
%the last item: iii
% the (last) widest item: iii
%
\begin{enumerate}[{\normalfont (i)}]
\item $p_{i}\nmid m+k(q-\chi )/d, n+k(q-\chi )/d$ and
$\theta _{i}:= o_{p_{i}}(m+k(q-\chi )/d) = o_{p_{i}}(n+k(q-\chi )/d)$,
\item $\gcd ((m+k(q-\chi )/d)^{\theta _{i}}-1,p_{i}^{\alpha _{i}}) = \gcd ((n+k(q-
\chi )/d)^{\theta _{i}} -1,p_{i}^{\alpha _{i}})$.
\item If $p_{i} = 2$, $\alpha _{i}>1$ and $\nu _{2}((m+k(q-\chi )/d)-1)=1$, then
$\gcd ((m+k(q-\chi )/d)^{2} -1,2^{\alpha _{i}}) = \gcd ((n+k(q-\chi )/d)^{2}-1,2^{
\alpha _{i}})$.
\end{enumerate}
\end{proposition}

\begin{proof}
Since $\gcd (m^{r}-1,q-\chi ) = \gcd (n^{r}-1,q-\chi )$ for any positive
integer $r$, if $p_{i}\nmid d$ we have
$ \gcd ((m+k(q-\chi )/d)^{r}-1,p_{i}^{\alpha _{i}}) = \gcd (m^{r}-1,p_{i}^{
\alpha _{i}}) = \gcd (n^{r}-1,p_{i}^{\alpha _{i}}) = \gcd ((n+k(q-
\chi )/d)^{r}-1,p_{i}^{\alpha _{i}})$. If $p_{i} \mid d$, then by {Lemma~\ref{mainprimelemma}}
$\gcd ((m+k(q-\chi )/d)^{r}-1,p_{i}^{\alpha _{i}}) = \gcd ((n+k(q-
\chi )/d)^{r}-1,p_{i}^{\alpha _{i}})$ if and only if the conditions are
satisfied.
\end{proof}

The next result is a special case of {Proposition~\ref{propsym}} for
$d$ prime. We add some assumptions to obtain more explicit conditions.

%c4.2 #&#
\begin{corollary}%
%%LEAP%%%\label{cor4.2}
\label{corosym2}
Let $p$ be a prime such that $\alpha = \nu _{p}(q-\chi ) >1$. If
$(m,n)\in S_{\chi }^{q}$ and $\nu _{p}(m-1)>0$, then
$(m + (q-\chi )/p,n + (q-\chi )/p) \in S_{\chi }^{q}$ if and only if one
of the following conditions holds:
%
%
%2 items enumerated
%the first item: i
%the last item: ii
% the (last) widest item: ii
%
\begin{enumerate}[{\normalfont (i)}]
\item $\nu _{p}(m-1) \neq \alpha -1$,
\item $ \dfrac{m-1}{p^{\alpha -1}}$ and
$ \dfrac{n-1}{p^{\alpha -1}}$ are either both or none congruent to
$-\dfrac{q-\chi }{p^{\alpha }}$ modulo $p$.
\end{enumerate}
\end{corollary}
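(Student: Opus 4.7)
The plan is to invoke Proposition~\ref{propsym} with $d = p$ and $k = 1$, and then inspect which of its three conditions do real work. Because $\nu_p(m-1) > 0$ by hypothesis and $(m,n) \in S_\chi^q$ forces $\gcd(m-1, q-\chi) = \gcd(n-1, q-\chi)$ via Proposition~\ref{conseq1} taken at $r = 1$, we also have $p \mid n - 1$. Since $\nu_p((q-\chi)/p) = \alpha - 1 \geq 1$, this yields $m + (q-\chi)/p \equiv n + (q-\chi)/p \equiv 1 \pmod p$, so condition~(\ref{ichi}) of Proposition~\ref{propsym} is satisfied with $\theta_p = 1$.

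Next I would dispose of condition~(\ref{san}), which is relevant only when $p = 2$. A direct expansion gives $(m + (q-\chi)/2)^2 = m^2 + m(q-\chi) + (q-\chi)^2/4$, and since $\nu_2((q-\chi)^2/4) = 2\alpha - 2 \geq \alpha$, the last two summands vanish modulo $2^\alpha$. Thus $(m + (q-\chi)/2)^2 \equiv m^2 \pmod{2^\alpha}$, and analogously for $n$, so this condition reduces to $\gcd(m^2 - 1, 2^\alpha) = \gcd(n^2 - 1, 2^\alpha)$, which is furnished by Proposition~\ref{conseq1} at $r = 2$.

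The remaining work is to analyze condition~(\ref{ni}), which with $\theta_p = 1$ states $\gcd(m - 1 + (q-\chi)/p, p^\alpha) = \gcd(n - 1 + (q-\chi)/p, p^\alpha)$. I would compute $\nu_p(m - 1 + (q-\chi)/p)$ by a case split on the position of $\nu_p(m-1)$ relative to $\alpha - 1$: if $\nu_p(m-1) < \alpha - 1$, the ultrametric inequality yields valuation $\nu_p(m-1)$; if $\nu_p(m-1) > \alpha - 1$, it yields $\alpha - 1$; and if $\nu_p(m-1) = \alpha - 1$, writing $m - 1 = u p^{\alpha-1}$ and $(q-\chi)/p = v p^{\alpha-1}$ with $v = (q-\chi)/p^\alpha$, the valuation becomes $\alpha - 1 + \nu_p(u + v)$. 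The analogous statements hold for $n$, and the hypothesis $(m,n) \in S_\chi^q$ gives $\min(\nu_p(m-1), \alpha) = \min(\nu_p(n-1), \alpha)$. In the first two subcases the two $\gcd$'s agree automatically, which is condition~(i) of the corollary; in the third the two agree precisely when $(m-1)/p^{\alpha-1}$ and $(n-1)/p^{\alpha-1}$ are either both congruent to $-(q-\chi)/p^\alpha$ modulo $p$ or both not, which is condition~(ii).

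The main subtlety sits in the boundary case $\nu_p(m-1) = \alpha - 1$, where nontrivial cancellation can push $\nu_p(m - 1 + (q-\chi)/p)$ from $\alpha - 1$ up to at least $\alpha$, forcing the $\gcd$ with $p^\alpha$ to jump from $p^{\alpha-1}$ to $p^\alpha$. This is precisely where the modular obstruction on $(m-1)/p^{\alpha-1}$ and $(n-1)/p^{\alpha-1}$ materializes.
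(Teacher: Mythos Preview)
Your proposal is correct and follows essentially the same route as the paper: invoke Proposition~\ref{propsym} with $d=p$, observe $\theta_p=1$, and reduce everything to the case split on $\nu_p(m-1)$ versus $\alpha-1$, with the boundary case yielding the modular obstruction of condition~(ii). If anything, you are slightly more careful than the paper in explicitly verifying conditions~(\ref{ichi}) and~(\ref{san}) of Proposition~\ref{propsym}; the paper jumps straight to condition~(\ref{ni}) and leaves those checks implicit.
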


\begin{proof}
Let $\beta =\nu _{p}(m-1)$. Since $\beta >0$, it follows that
$o_{p}(m)=1$. If $\beta \neq \alpha -1$, then
\begin{equation*}
\nu _{p}\left (m+\dfrac{q-\chi }{p}-1\right ) =
\begin{cases}
\beta & \text{ if } \beta <\alpha -1
\\
\alpha -1 & \text{ if } \beta >\alpha -1.
\end{cases}
\end{equation*}
The same identities hold for $\nu _{p}(n+(q-\chi )/p)$. When
$\beta = \alpha -1$, let $k = \dfrac{m-1}{p^{\alpha -1}}$,
$s=\dfrac{n-1}{p^{\alpha -1}}$ and
$\ell = \dfrac{q-\chi }{p^{\alpha }}$. Then
$m + (q-\chi )/p-1 = p^{\alpha -1}(k+\ell )$ and
$n + (q-\chi )/p-1 = p^{\alpha -1}(s+\ell )$. Thus the equality of the
gcds of {Proposition~\ref{propsym}}(ii) holds with $\theta =1$ if and
only if either $p$ divides both $k+\ell $ and $s+\ell $ or $p$ divides
neither one.
\end{proof}

The next corollary is yet another specialization of {Proposition~\ref{propsym}}
to $d=2$, or {Corollary~\ref{corosym2}} to $p=2$. As pointed out earlier,
we must have $\nu _{2}(q-\chi ) > 1$ and $\nu _{2}(m-1)>0$, thus the assumptions
of {Corollary~\ref{corosym2}} are not restrictive in this case. This result
allows us to limit the values of $m$ when searching for R\'{e}dei permutations
with the same cycle structure.

%c4.3 #&#
\begin{corollary}%
%%LEAP%%%\label{cor4.3}
\label{corosym1}
Suppose $\nu _{2}(q-\chi )>1$. Then $(m,n)\in S_{\chi }^{q}$ if and only
if $(m + (q-\chi )/2,n + (q-\chi )/2)\in S_{\chi }^{q}$.
\end{corollary}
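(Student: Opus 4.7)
The plan is to deduce this directly from Corollary~\ref{corosym2} applied with $p=2$, observing that both of its hypotheses are automatic in the setting we care about. Since $q$ is odd, for $R_{m,a}$ (or $R_{n,b}$) to be a permutation we need $\gcd(m,q-\chi)=1$, which forces $m$ to be odd, so $\nu_2(m-1)\geq 1>0$. Together with the standing assumption $\alpha := \nu_2(q-\chi)>1$, both hypotheses of Corollary~\ref{corosym2} are met, so all I need to check is that either condition (i) or condition (ii) of that corollary holds.

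For the forward direction, assume $(m,n)\in S_{\chi}^q$. If $\nu_2(m-1)\neq\alpha-1$, then condition (i) holds trivially and there is nothing more to do. Suppose instead $\nu_2(m-1)=\alpha-1$. By Proposition~\ref{conseq1} with $r=1$ we have $\gcd(m-1,q-\chi)=\gcd(n-1,q-\chi)$, which upon taking $2$-adic valuations gives
\[
\min(\nu_2(m-1),\alpha)=\min(\nu_2(n-1),\alpha),
\]
and hence $\nu_2(n-1)=\alpha-1$ as well. Consequently $(m-1)/2^{\alpha-1}$ and $(n-1)/2^{\alpha-1}$ are both odd, so both are $\equiv 1\pmod{2}$. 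Since $(q-\chi)/2^{\alpha}$ is odd by the definition of $\alpha$, we also have $-(q-\chi)/2^{\alpha}\equiv 1\pmod 2$. Thus condition (ii) of Corollary~\ref{corosym2} is satisfied, and we conclude that $(m+(q-\chi)/2,n+(q-\chi)/2)\in S_{\chi}^q$.

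For the converse, note that because $\alpha>1$, the element $(q-\chi)/2$ is even, so $m':=m+(q-\chi)/2$ and $n':=n+(q-\chi)/2$ are still odd and remain coprime to $q-\chi$. Applying the forward direction just proved to the pair $(m',n')\in S_{\chi}^q$, we obtain
\[
\bigl(m'+(q-\chi)/2,\; n'+(q-\chi)/2\bigr) = \bigl(m+(q-\chi),\; n+(q-\chi)\bigr) \in S_{\chi}^q.
\]
Since $R_{m+(q-\chi),a}=R_{m,a}$ and $R_{n+(q-\chi),b}=R_{n,b}$ as permutations of $\mathbb P^1(\mathbb F_q)$, this yields $(m,n)\in S_{\chi}^q$.

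The only genuinely nontrivial step is verifying condition (ii) of Corollary~\ref{corosym2} in the boundary case $\nu_2(m-1)=\alpha-1$; this is where the parity of $(q-\chi)/2^{\alpha}$ and the identity $\gcd(m-1,q-\chi)=\gcd(n-1,q-\chi)$ conspire to force both residues to equal $1$ modulo $2$. Everything else reduces to bookkeeping about parities and the fact that shifting by $q-\chi$ does not change the induced permutation.
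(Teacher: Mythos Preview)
Your proof is correct and follows a route closely related to, but packaged differently from, the paper's. The paper argues directly: it computes $\nu_2\bigl(m+(q-\chi)/2-1\bigr)$ as a function of $\beta=\nu_2(m-1)$ (treating the three cases $\beta<\alpha-1$, $\beta=\alpha-1$, $\beta>\alpha-1$) and also verifies $\nu_2\bigl((m+(q-\chi)/2)^2-1\bigr)=\nu_2(m^2-1)$, thereby checking all the $p=2$ conditions of Lemma~\ref{mainprimelemma} for both directions at once. You instead invoke Corollary~\ref{corosym2} with $p=2$ and make the neat observation that its condition~(ii) is automatic: modulo~$2$ every odd integer equals~$1$, so $(m-1)/2^{\alpha-1}$, $(n-1)/2^{\alpha-1}$, and $-(q-\chi)/2^{\alpha}$ all coincide. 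Your handling of the converse---shift once more and reduce modulo $q-\chi$---is clean and avoids the paper's case analysis. One small wording slip: in the converse you write that $m'$ is ``still odd'', implicitly assuming $m$ is odd; but in that direction oddness of $m'$ comes directly from $(m',n')\in S_{\chi}^q$, which is all you actually use.
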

\begin{proof}
Let $\alpha = \nu _{2}(q-\chi )$ and $\beta = \nu _{2}(m-1)$. Then
\begin{equation*}
\nu _{2}\left (m+\dfrac{q-\chi }{2}-1\right )
\begin{cases}
=\beta & \text{ if } \beta <\alpha -1
\\
\geq \alpha & \text{ if } \beta =\alpha -1
\\
=\alpha -1 & \text{ if } \beta >\alpha -1.
\end{cases}
\end{equation*}
Furthermore, $\nu _{2}((m+(q-\chi )/2)^{2}-1) = \nu _{2}(m^{2}-1)$. Similar
results hold for $n$ and $n+(q-\chi )/2$. It follows that
$\gcd (m^{j}-1,2^{\alpha }) = \gcd (n^{j}-1,2^{\alpha })$ if and only if
$\gcd ((m+(q-\chi )/2)^{j}-1, 2^{\alpha }) = \gcd ((n+(q-\chi )/2)^{j}-1,
2^{\alpha })$.
\end{proof}

The following two propositions establish some sort of symmetry among the
pairs in $S_{\chi }^{q}$.

%p4.4 #&#
\begin{proposition}%
%%LEAP%%%\label{prop4.4}
\label{comp}
Let $\alpha =\nu _{2}(q-\chi )$. If $(m,n) \in S_{\chi }^{q}$, then
$(q-\chi -m,q-\chi -n)\in S_{\chi }^{q}$ if and only if
$\gcd (m+1,2^{\alpha })=\gcd (n+1,2^{\alpha })$. In particular, if
$\alpha \in \{1,2\}$, then $(m,n)\in S_{\chi }^{q}$ if and only if
$(q-\chi -m,q-\chi -n)\in S_{\chi }^{q}$.
\end{proposition}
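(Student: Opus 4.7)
The plan is to use Proposition~\ref{conseq1} and translate everything into gcd comparisons. Since $q-\chi-m\equiv -m\pmod{q-\chi}$, we have
\[
(q-\chi-m)^r-1\equiv (-1)^rm^r-1\pmod{q-\chi},
\]
so for $r$ even this is $\equiv m^r-1$, and for $r$ odd this is $\equiv -(m^r+1)$. Noting that $\gcd(q-\chi-m,q-\chi)=\gcd(m,q-\chi)=1$, the statement $(q-\chi-m,q-\chi-n)\in S_\chi^q$ is therefore equivalent to
\[
\gcd(m^r+1,q-\chi)=\gcd(n^r+1,q-\chi)\quad\text{for every odd }r,
\]
since the even-$r$ gcds are automatic from $(m,n)\in S_\chi^q$. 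By the multiplicativity $\gcd(u,q-\chi)=\prod_i\gcd(u,p_i^{\alpha_i})$, I will check the condition prime-by-prime.

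For an odd prime $p\mid q-\chi$ I would use $m^{2r}-1=(m^r-1)(m^r+1)$. Since $m$ is coprime to $q-\chi$ (in particular odd and coprime to $p$), the two factors $m^r-1$ and $m^r+1$ are coprime away from $2$; hence at most one is divisible by $p$. So either $p\nmid m^r-1$, in which case $\gcd(m^r+1,p^{\alpha_i})=\gcd(m^{2r}-1,p^{\alpha_i})$, or $p\mid m^r-1$, in which case $\gcd(m^r+1,p^{\alpha_i})=1$. In both subcases, the value of $\gcd(m^r+1,p^{\alpha_i})$ is completely determined by the pair $\bigl(\gcd(m^r-1,p^{\alpha_i}),\gcd(m^{2r}-1,p^{\alpha_i})\bigr)$, and these coincide with the analogous quantities for $n$ because $(m,n)\in S_\chi^q$. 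Thus for every odd prime $p\mid q-\chi$ the condition holds automatically.

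For $p=2$ I will use the identity $\nu_2(m^r+1)=\nu_2(m+1)$ for $m,r$ odd. This follows from the factorization $m^r+1=(m+1)(m^{r-1}-m^{r-2}+\cdots-m+1)$, whose second factor is a sum of $r$ odd terms and hence odd. Consequently $\gcd(m^r+1,2^\alpha)=\gcd(m+1,2^\alpha)$ for every odd $r$, and the same for $n$. Combining all primes, the condition for odd $r$ collapses to the single requirement $\gcd(m+1,2^\alpha)=\gcd(n+1,2^\alpha)$, which proves the main equivalence.

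For the ``in particular'' statement I simply observe that when $\alpha=1$, both $m+1$ and $n+1$ are even (since $m,n$ are odd), so $\gcd(m+1,2)=\gcd(n+1,2)=2$ automatically. When $\alpha=2$, the hypothesis $(m,n)\in S_\chi^q$ with $r=1$ gives $\gcd(m-1,4)=\gcd(n-1,4)$; but for odd $m$ one has $\gcd(m-1,4)=4$ iff $m\equiv 1\pmod 4$ iff $\gcd(m+1,4)=2$, so $\{\gcd(m-1,4),\gcd(m+1,4)\}=\{2,4\}$, and equality of one pair forces equality of the other. The main technical care is in the odd-prime case, where one has to notice that although $\nu_p(m^r+1)$ itself cannot be read off from $\nu_p(m^r-1)$ and $\nu_p(m^{2r}-1)$ (the latter is a sum), the \emph{truncated} value $\min(\alpha_i,\nu_p(m^r+1))$ can be, because of the dichotomy $p\mid m^r-1$ versus $p\mid m^r+1$ --- this is the only subtle point of the argument.
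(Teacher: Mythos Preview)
Your proof is correct and follows essentially the same approach as the paper's: both reduce to comparing $\gcd(m^r+1,q-\chi)$ with $\gcd(n^r+1,q-\chi)$ for odd $r$, handle the odd part of $q-\chi$ via the factorization $m^{2r}-1=(m^r-1)(m^r+1)$ (the paper packages this as the single quotient $\gcd(m^{2r}-1,(q-\chi)/2^{\alpha})/\gcd(m^{r}-1,(q-\chi)/2^{\alpha})$, you split it prime by prime), and deal with the $2$-part by observing $\nu_2(m^r+1)=\nu_2(m+1)$ for odd $r$. The only cosmetic difference is that the paper obtains the latter by applying Lemma~\ref{lemmagcdp}\eqref{quatro} to $q-\chi-m$, whereas you get it directly from $m^r+1=(m+1)(m^{r-1}-m^{r-2}+\cdots+1)$.
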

\begin{proof}
Let $m$ be an odd integer. If $r$ is even, then
%
%e1 #&#
\begin{equation}
\label{gcd1001}
\gcd ((q-\chi -m)^{r}-1,q-\chi ) = \gcd (m^{r}-1,q-\chi ).%
\end{equation}
If $r$ is odd, then we can write
%
%e2 #&#
\begin{align}
&\gcd ((q-\chi -m)^{r}-1,q-\chi )
\nonumber
\\
=& \gcd ((q-\chi -m)^{r}-1,(q-\chi )/2^{\alpha })\cdot \gcd ((q-\chi -m)^{r}-1,2^{
\alpha })
\nonumber
\\
= & \gcd (m^{r}+1, (q-\chi )/2^{\alpha }) \cdot \gcd (q-\chi -m-1,2^{
\alpha })
\label{gcdrodd}
\\
= & \gcd (m^{r}+1, (q-\chi )/2^{\alpha }) \cdot \gcd (m+1,2^{\alpha }),
\nonumber
\end{align}
where we applied {Lemma~\ref{lemmagcdp}}(ii) to obtain~{\eqref{gcdrodd}}.

Since $m^{2r}-1=(m^{r}-1)(m^{r}+1)$, $\gcd (m^{r}-1,m^{r}+1)=2$, and
$(q-\chi )/2^{\alpha }$ is odd, we have
\begin{equation*}
\gcd (m^{2r}-1,(q-\chi )/2^{\alpha })= \gcd (m^{r}-1,(q-\chi )/2^{
\alpha })\cdot \gcd (m^{r}+1,(q-\chi )/2^{\alpha }).%
\end{equation*}
Thus,
%
%e3 #&#
\begin{equation}
\label{gcd1000}
\gcd ((q-\chi -m)^{r}-1,q-\chi ) =
\dfrac{\gcd (m^{2r}-1,(q-\chi )/2^{\alpha })}{\gcd (m^{r}-1,(q-\chi )/2^{\alpha })}
\cdot \gcd (m+1,2^{\alpha }).%
\end{equation}

If $(m,n) \in S_{\chi }^{q}$, then
$\gcd (m^{r}-1,q-\chi ) = \gcd (n^{r}-1,q-\chi )$ for any positive integer
$r$. By~{\eqref{gcd1001}},
$\gcd ((q-\chi -m)^{r}, q-\chi ) = \gcd ((q-\chi -n)^{r}, q-\chi )$ for
any even $r$. By~{\eqref{gcd1000}}, if $r$ is odd, then
$\gcd ((q-\chi -m)^{r}, q-\chi ) = \gcd ((q-\chi -n)^{r}, q-\chi )$ if
and only if $\gcd (m+1,2^{\alpha })=\gcd (n+1,2^{\alpha })$.

In the case $\alpha \in \{1,2\}$, one can easily verify that
$\gcd (m-1,2^{\alpha }) = \gcd (n-1,2^{\alpha })$ if and only if
$\gcd (m+1,2^{\alpha }) = \gcd (n+1,2^{\alpha })$.
\end{proof}

By a similar argument one can also prove the following.

%p4.5 #&#
\begin{proposition}
\label{prop4.5}
Suppose $\alpha =\nu _{2}(q-\chi ) >1$ and $(m,n) \in S_{\chi }^{q}$. Then
$((q-\chi )/2-m, (q-\chi )/2-n) \in S_{\chi }^{q}$ if and only if
$\gcd (m+1,2^{\alpha }) = \gcd (n+1, 2^{\alpha })$. In particular, if
$\alpha = 2$, then $(m,n)\in S_{\chi }^{q}$ if and only if
$((q-\chi )/2-m,(q-\chi )/2-n)\in S_{\chi }^{q}$.
\end{proposition}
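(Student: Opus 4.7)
The plan is to follow the proof of Proposition~\ref{comp} closely, replacing $q-\chi-m$ by $(q-\chi)/2-m$. Set $Q=q-\chi$ and write $Q=2^{\alpha}M$ with $M$ odd. Since $\alpha \geq 2$ forces $Q\mid (Q/2)^{2}$, the binomial expansion collapses to
\[ (Q/2-m)^{r} \equiv (-m)^{r}+r(Q/2)(-m)^{r-1} \pmod{Q}. \]
For even $r$ the second term is divisible by $Q$, so $(Q/2-m)^{r}\equiv m^{r}\pmod{Q}$ and the identity $\gcd((Q/2-m)^{r}-1,Q)=\gcd(m^{r}-1,Q)$ is immediate. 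For odd $r$, reducing modulo the odd factor $M$ (using $M\mid Q/2$) gives $\gcd((Q/2-m)^{r}-1,M)=\gcd(m^{r}+1,M)$, while reducing modulo $2^{\alpha}$ (using $Q/2\equiv 2^{\alpha-1}\pmod{2^{\alpha}}$ and the fact that $rm^{r-1}$ is odd) yields
\[ (Q/2-m)^{r}-1 \equiv -(m^{r}+1)+2^{\alpha-1} \pmod{2^{\alpha}}. \]

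From $(m,n)\in S_{\chi}^{q}$ I would exploit $m^{2r}-1=(m^{r}-1)(m^{r}+1)$ together with $\gcd(m^{r}-1,m^{r}+1)\leq 2$ and $M$ odd to deduce $\gcd(m^{r}+1,M)=\gcd(n^{r}+1,M)$ for every positive integer $r$. This reduces the desired equivalence to the 2-adic assertion
\[ \gcd((m^{r}+1)-2^{\alpha-1},\,2^{\alpha}) = \gcd((n^{r}+1)-2^{\alpha-1},\,2^{\alpha}) \]
for every odd $r$. By Lemma~\ref{lemmagcdp}\eqref{quatro} applied to $-m$, one has $\nu_{2}(m^{r}+1)=\nu_{2}(m+1)$ for odd $r$, so the gcd above depends only on $\beta:=\nu_{2}(m+1)$ and $\alpha$: it equals $2^{\beta}$ when $\beta\leq\alpha-2$, equals $2^{\alpha}$ when $\beta=\alpha-1$, and equals $2^{\alpha-1}$ when $\beta\geq\alpha$.

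Comparing with $\gcd(m+1,2^{\alpha})=2^{\min(\beta,\alpha)}$, a direct inspection shows that both functions of $\beta$ induce exactly the same partition $\{0\},\{1\},\ldots,\{\alpha-1\},\{\alpha,\alpha+1,\ldots\}$ of the nonnegative integers, so the 2-adic identity holds for every odd $r$ iff $\gcd(m+1,2^{\alpha})=\gcd(n+1,2^{\alpha})$, giving the stated equivalence. For the $\alpha=2$ specialization, taking $r=1$ in the defining condition of $S_{\chi}^{q}$ forces $\gcd(m-1,4)=\gcd(n-1,4)$, hence $m\equiv n\pmod{4}$, and therefore $\gcd(m+1,4)=\gcd(n+1,4)$ is automatic, removing the side hypothesis. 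The main obstacle will be the case split in the 2-adic step: the two gcd-functions of $\beta$ are not literally equal (the classes $\{\alpha-1\}$ and $\{\geq\alpha\}$ get swapped values), and some care is needed to verify that both nonetheless cut the $\beta$-axis into the same equivalence classes.
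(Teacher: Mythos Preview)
Your proposal is correct and carries out precisely the adaptation of the proof of Proposition~\ref{comp} that the paper indicates by ``a similar argument''. The only new ingredient beyond that proof is the extra $2$-adic bookkeeping forced by $(q-\chi)/2\equiv 2^{\alpha-1}\pmod{2^{\alpha}}$ rather than $0$, and your level-set comparison of the two functions of $\beta=\nu_2(m+1)$ handles this cleanly; the observation that $(Q/2)^2\equiv 0\pmod Q$ when $\alpha\geq 2$ is exactly what makes the binomial truncation and hence the even-$r$ case go through.
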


%e4.6 #&#
\begin{example}%
%%LEAP%%%\label{exmp4.6}
\label{continuacao}
We now return to the case $q=49$ to illustrate the results in this section.
All R\'{e}dei permutations over
$\mathbb{P}^{1}\left (\mathbb{F}_{49}\right )$ are given in {Table~\ref{tabela2ciclo}},
sorted out based on their cycle structures. The pairs
$(m,n) \in S_{\chi }^{49}$ for $\chi =-1$ and $\chi =1$ are shown in {Figs.~\ref{fig1}a
and \ref{fig1}b}, respectively. We only display $(m,n)$ with
$1<m< n<q-\chi $, symbol-coded by their cycle structures, that is, each
symbol identifies pairs whose coordinates yield R\'{e}dei permutations with
the same cycle structure. Clearly, points that belong to the same horizontal
or vertical line have the same symbol.

\begin{figure}[htbp]
\centering
\begin{subfigure}[b]{0.5\textwidth}
\begin{center}
\begin{tikzpicture}[scale=0.8]
  \begin{axis}
    \addplot[only marks, mark size=3pt] coordinates {
      (3, 13)
      (3, 17)
      (3, 23)
      (3, 27)
      (3, 33) 
      (3, 37) 
      (3, 47)
      (13, 17) 
      (13, 23) 
      (13, 27) 
      (13, 33) 
      (13, 37) 
      (13, 47) 
      (17, 23) 
      (17, 27) 
      (17, 33) 
      (17, 37) 
      (17, 47)
      (23, 27) 
      (23, 33) 
      (23, 37) 
      (23, 47) 
      (27, 33) 
      (27, 37) 
      (27, 47) 
      (29, 39) 
      (31, 41) 
      (33, 37) 
      (33, 47) 
      (37, 47) 
  };     
    \addplot[
                thick,
                mark=o,
                mark size=3pt,
                smooth, color=black] coordinates { 
      (7, 43) 
  };
    \addplot[only marks,thick, mark= square*, mark size=3pt, color = black ] coordinates { 
      (9, 19) 
      (9, 29) 
      (9, 39)
      (19, 29) 
      (19, 39)
  };
    \addplot[only marks,  mark= square, mark size=3pt, color = black] coordinates { 
      (11, 21) 
      (11, 31) 
      (11, 41)
      (21, 31) 
      (21, 41)
  };
   \end{axis}
\end{tikzpicture}
\end{center}
\subcaption{$\chi=-1$}\label{fig1partb}
\end{subfigure}%
\begin{subfigure}[b]{0.5\textwidth}
\begin{center}
\begin{tikzpicture}[scale=0.8]
  \begin{axis}
    \addplot[only marks,mark size=3pt] coordinates {
      (5, 29) 
  };     
    \addplot[only marks, thick, mark = o, mark size=3pt] coordinates { 
      (7, 31) 
  };
    \addplot[only marks, thick, mark = square*,mark size=3pt] coordinates { 
      (11, 35) 
  };
    \addplot[only marks, mark = square,mark size=3pt] coordinates { 
      (13, 37) 
  };
    \addplot[only marks, thick, mark = triangle*, mark size=4pt] coordinates { 
      (19, 43) 
  };
    \addplot[only marks, thick, mark=triangle, mark size=4pt] coordinates { 
      (23, 47) 
  };
   \end{axis}
\end{tikzpicture}
\end{center}
\subcaption{$\chi=1$}\label{fig1parta}
\end{subfigure}
\caption{Pairs $(m,n) \in S_{\chi}^{49}$ with $1<m< n<49-\chi$,  symbol-coded by their cycle structures.}\label{fig1}
\end{figure}
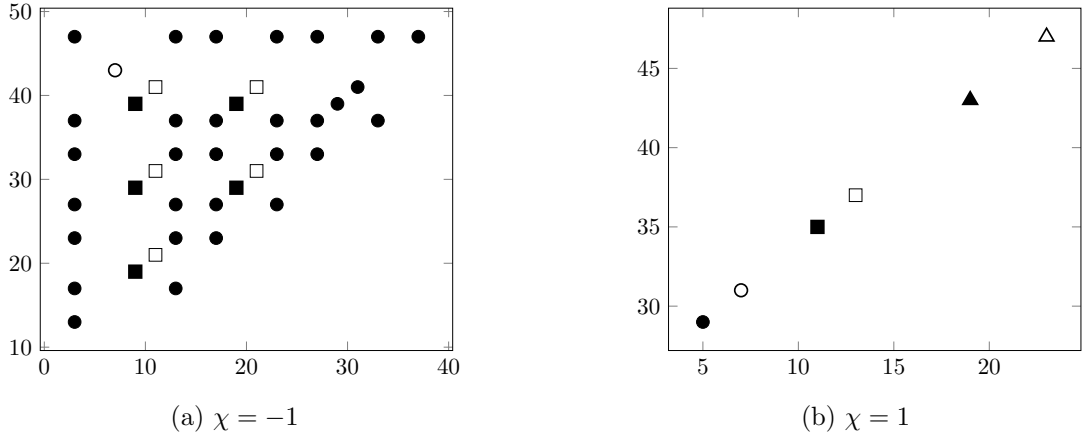

We notice that the pairs are distributed over lines of slope $1$, each
one of them containing all pairs $(m,n)$ for which
$n=m+k(q-\chi )/d$ for the same values of $d$ and $k$. The pairs along
these lines can be obtained from a single pair by applying {Proposition~\ref{propsym}}.
For instance, in {Fig.~\ref{fig1}}a the line at the bottom contains
the pairs $(13,17)$, $(23,27)$, and $(33,37)$. The pairs $(23,27)$ and
$(33,37)$ are obtained from $(13,17)$ by adding $10 = (q-\chi )/5$ and
$20 = 2(q-\chi )/5$ to both coordinates, respectively. In fact, all
$41$ pairs in {Fig.~\ref{fig1}}a can be generated from $(3,13)$,
$(3,17)$, $(3,23)$, $(3,27)$, $(3,33)$, $(3,37)$, $(3,47)$, and
$(13,17)$ in this fashion, for different values of $k(q-\chi )/d$. In {Table~\ref{tabela4}}
we show each one of these generator pairs together with the pairs they
generate along the lines. We observe that some of the pairs generated may
lie on another line, as it happens for instance with the pairs
$(17,23)$ and $(27,33)$. Starting with the pair $(3,47)$ on the line
$y=x+44$, adding $(q-\chi )/5$ and $2(q-\chi )/5$ to both coordinates and
reducing modulo $q-\chi =50$, would actually produce the pairs
$(23,17)$ and $(33,27)$, respectively. However, since we only display pairs
with the first coordinate smaller than the second one, the points shown
are $(17,23)$ and $(27,33)$, which lie on the line $y=x+6$. All points
in {Fig.~\ref{fig1}}b lie on the line $y=x+24$.

\begin{table}
 \begin{tabular}{|l|l|l|l|} 
 \hline
Equation & Generator  & $d$& $(m+k\cdot 50/d,n +k\cdot 50/d)$,\\
of the line & $(m,n)$ &  & $(n+k\cdot 50/d,m +k\cdot 50/d)$ in $\mathbb Z_{50}^2$\\
 \hline
 \hline
 $y=x+4$ & $(13,17)$ & $5$ & $(23,27)$, $(33,37)$  \\
\hline
 $y=x+44$ & $(3,47)$& &\\
 $y=x+6$ & & $5$  &$(17,23)$, $(27,33)$\\
\hline
 $y=x+10$ & $(3,13)$& $25$ &$(9,19)$, $(11,21)$, $(13,23)$, $(17,27)$, $(19,29)$, $(21,31)$, $(23,33)$,  \\
& & &  $(27,37)$, $(29,39)$, $(31,41)$,  $(37,47)$\\
\hline
 $y=x+14$  & $(3, 17)$& $5$ & $(13,27)$, $(23,37)$, $(33,47)$ \\
 $y=x+36$ & && $(7,43)$\\
\hline
 $y=x+34$ &$(3,37)$& $5$ &$(13,47)$\\
 $y=x+16$ & & & $(17,33)$\\
\hline
 $y=x+20$ &$(3,23)$& $25$ & $(9,29)$, $(11,31)$, $(13,33)$, $(17,37)$, $(19, 39)$, $(21,41)$,  $(27,47)$ \\
\hline
$y=x+24$ &(3,27)& $5$& $(13,37)$,  $(23,47)$\\ 
\hline
 $y=x+30$ & $(3,33)$& $25$ & $(9,39)$, $(11,41)$, $(17,47)$\\
\hline
 \end{tabular}
  \caption{The distribution of the points in Figure~\ref{fig1partb} over eleven lines and their corresponding generators.}\label{tabela4}
 \end{table}

Since $\nu _{2}(q-\chi ) =1$ for $\chi =-1$, {Proposition~\ref{comp}} implies
that the plot in {Fig.~\ref{fig1}}a is symmetric about the line
$y=-x+q-\chi $. This does not happen when $\chi =1$, as the reflections
of the pairs $(7,31)$ and $(13,47)$ about this line are not in
$S_{1}^{49}$.\quad$\diamond $
\end{example}

At the end of {Example~\ref{49}} we noted that
$S_{1}^{q}=S_{-1}^{\bar{q}}$ whenever $q-\bar{q}=2$. The next result shows
a relationship between $S_{\chi }^{q}$ and $S_{\chi }^{\bar{q}}$ when
$q$ and $\bar{q}$ satisfy another condition.

%p4.7 #&#
\begin{proposition}%
%%LEAP%%%\label{prop4.7}
\label{propsymmetry1}
Suppose that $q$ and $\bar{q}$ are powers of a prime $p$ such that
$\nu _{p}(q-\chi ) = \nu _{p}(\bar{q}-\chi )=\alpha >0$ and
$(q-\chi )/p^{\alpha } + (\bar{q} -\chi )/p^{\alpha } \equiv 0 \pmod{p}$.
Then $(m,m + (q-\chi )/p)\in S_{\chi }^{q}$ if and only if
$(m,m - (\bar{q}-\chi )/p)\in S_{\chi }^{\bar{q}}$.
\end{proposition}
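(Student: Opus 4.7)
The plan is to apply Theorem~\ref{mainthm} in each setting and show that both memberships reduce to the same $p$-adic conditions on $m$. Set $n=m+(q-\chi)/p$ and $n'=m-(\bar q-\chi)/p$. Since $\gcd(n-m,q-\chi)=(q-\chi)/p$ and $\gcd(n'-m,\bar q-\chi)=(\bar q-\chi)/p$, the proper divisor appearing in Theorem~\ref{mainthm} is $d=p$ in both cases. Consequently the only prime triggering Conditions~(i)--(iii) is $p$ itself, while the other prime divisors of $q-\chi$ and of $\bar q-\chi$ contribute only background coprimality, which is unaffected by the translations from $m$ to $n$ and $n'$ because $n\equiv m\pmod{(q-\chi)/p^{\alpha}}$ and $n'\equiv m\pmod{(\bar q-\chi)/p^{\alpha}}$.

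The core of the proof is the congruence
\[
n-n' \;=\; \frac{q-\chi}{p}+\frac{\bar q-\chi}{p} \;=\; p^{\alpha-1}\!\left(\frac{q-\chi}{p^{\alpha}}+\frac{\bar q-\chi}{p^{\alpha}}\right),
\]
which, together with the hypothesis $(q-\chi)/p^{\alpha}+(\bar q-\chi)/p^{\alpha}\equiv 0\pmod{p}$, gives $p^{\alpha}\mid n-n'$, i.e. $n\equiv n'\pmod{p^{\alpha}}$. Raising to the $r$-th power yields $n^{r}\equiv (n')^{r}\pmod{p^{\alpha}}$ for every $r\geq 1$. Therefore $p\nmid n\iff p\nmid n'$, $o_{p}(n)=o_{p}(n')$, and
\[
\gcd(n^{\theta}-1,p^{\alpha})=\gcd((n')^{\theta}-1,p^{\alpha})
\]
for $\theta=o_{p}(m)$, with the analogous identity for exponent $2$ when the $p=2$ clause of Theorem~\ref{mainthm} applies. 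Hence each of the three $p$-adic conditions of Theorem~\ref{mainthm} holds for $(m,n)$ relative to $q$ exactly when it holds for $(m,n')$ relative to $\bar q$, which delivers the stated equivalence.

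The main obstacle I anticipate is not conceptual but bookkeeping: verifying via Remark~\ref{remark1} that $d=p$ is the correct divisor in each application of Theorem~\ref{mainthm}, and tracking the coprimality assumptions $\gcd(m,q-\chi)=1$ and $\gcd(m,\bar q-\chi)=1$ consistently across the two settings. Once the congruence $n\equiv n'\pmod{p^{\alpha}}$ is available, the remainder is a direct comparison of the conditions in Theorem~\ref{mainthm}.
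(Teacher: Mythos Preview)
Your proposal is correct and follows essentially the same route as the paper. Both arguments hinge on the single observation that the hypothesis $(q-\chi)/p^{\alpha}+(\bar q-\chi)/p^{\alpha}\equiv 0\pmod p$ is equivalent to $(q-\chi)/p+(\bar q-\chi)/p\equiv 0\pmod{p^{\alpha}}$, i.e.\ $n\equiv n'\pmod{p^{\alpha}}$; from there the $p$-part of the membership criterion matches on both sides, while the complementary parts are automatic since $n\equiv m\pmod{(q-\chi)/p^{\alpha}}$ and $n'\equiv m\pmod{(\bar q-\chi)/p^{\alpha}}$. The only cosmetic difference is that the paper works directly with the gcd criterion of Proposition~\ref{conseq1}, whereas you route the same computation through Theorem~\ref{mainthm} with $d=p$; the content is identical, and the coprimality bookkeeping you flag is treated with the same informality in the paper's own proof.
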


\begin{proof}
We observe that
$(q-\chi )/p^{\alpha } + (\bar{q} -\chi )/p^{\alpha } \equiv 0 \pmod{p}$ if
and only if
$(q-\chi )/p + (\bar{q} -\chi )/p \equiv 0 \pmod{p^{\alpha }}$. Then
\begin{equation*}
\gcd \left ( \left (m+\dfrac{q-\chi }{p}\right )^{r}-1,p^{\alpha }
\right ) = \gcd \left ( \left ( m - \dfrac{\bar{q}-\chi }{p} \right )^{r}-1,p^{
\alpha }\right ).%
\end{equation*}
Since
$\gcd ((m + (q-\chi )/p)^{r}-1,(q-\chi )/p^{\alpha }) = \gcd ((m - (
\bar{q}-\chi )/p)^{r}-1,(q-\chi )/p^{\alpha }) = \gcd (m^{r}-1,(q-
\chi )/p^{\alpha })$, the result follows.
\end{proof}

%s5 #&#
\section{Families of R\'edei permutations with the same cycle structure}
%%LEAP%%%\label{sec5}
\label{families}

In this section we find explicit families of R\'{e}dei permutations that
share the same cycle structure. The first two families depend on the gcd
of integers of the form $c^{k}\pm 1$.

%l5.1 #&#
\begin{lemma}%
%%LEAP%%%\label{lem5.1}
\label{mdc}
We have
%
%
%3 items enumerated
%the first item: i
%the last item: iii
% the (last) widest item: iii
%
\begin{enumerate}[{\normalfont (i)}]
\item $\gcd (c^{k}-1,c^{\ell }-1)=c^{\gcd (k,\ell )}-1$,
\item
$\gcd (c^{k}+1,c^{\ell }+1)=
\begin{cases}
c^{\gcd (k,\ell )}+1 & \text{ if } \nu _{2}(k)=\nu _{2}(\ell )
\\
2 & \text{ if } \nu _{2}(k)\neq \nu _{2}(\ell ) \text{ and } c
\text{ is odd}
\\
1 & \text{ if } \nu _{2}(k)\neq \nu _{2}(\ell ) \text{ and } c
\text{ is even},
\\
\end{cases}
$
\item
$\gcd (c^{k}+1,c^{\ell }-1)=
\begin{cases}
c^{\gcd (k,\ell )}+1 & \text{ if } \nu _{2}(k)<\nu _{2}(\ell )
\\
2 & \text{ if } \nu _{2}(k)\geq \nu _{2}(\ell ) \text{ and } c
\text{ is odd}
\\
1 & \text{ if } \nu _{2}(k)\geq \nu _{2}(\ell ) \text{ and } c
\text{ is even. }
\\
\end{cases}
$
\end{enumerate}
\end{lemma}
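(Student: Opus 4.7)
My plan is to prove (i) by a direct Euclidean-algorithm-style reduction on the exponents, and parts (ii) and (iii) by a two-sided analysis: first showing that $c^{\gcd(k,\ell)}+1$ divides the gcd whenever the hypothesis on $\nu_2$ allows it, then establishing the reverse inclusion prime by prime via the multiplicative order of $c$ modulo each prime divisor, upgraded to a $p$-adic valuation statement by an LTE computation.

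For (i), assume $k \geq \ell$ and use the identity
$c^{k}-1 = c^{k-\ell}(c^{\ell}-1) + (c^{k-\ell}-1)$,
which gives $\gcd(c^{k}-1, c^{\ell}-1) = \gcd(c^{k-\ell}-1, c^{\ell}-1)$. Iterating this step mirrors the Euclidean algorithm on the pair $(k,\ell)$ and terminates at $\gcd(c^{\gcd(k,\ell)}-1, 0) = c^{\gcd(k,\ell)}-1$.

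For (ii) and (iii), set $d = \gcd(k,\ell)$, and write $k = dk'$, $\ell = d\ell'$ with $\gcd(k',\ell')=1$. The easy direction uses the elementary identities $x+1 \mid x^n + 1$ for odd $n$ and $x+1 \mid x^n - 1$ for even $n$: in case (ii) under $\nu_2(k)=\nu_2(\ell)$ both $k',\ell'$ are odd, so $c^d+1 \mid \gcd(c^{k}+1, c^{\ell}+1)$; in case (iii) under $\nu_2(k) < \nu_2(\ell)$, $k'$ is odd and $\ell'$ is even, so $c^d+1 \mid \gcd(c^{k}+1, c^{\ell}-1)$. For the reverse inclusion, pick any prime $p$ dividing the gcd and set $e = \operatorname{ord}_p(c)$. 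In (ii), $c^{k} \equiv c^{\ell} \equiv -1 \pmod p$ forces $e \mid 2k$, $e \mid 2\ell$, $e \nmid k$, $e \nmid \ell$, hence $\nu_2(e) = \nu_2(k)+1 = \nu_2(\ell)+1$; thus odd primes can divide the gcd only when $\nu_2(k) = \nu_2(\ell)$, and in that case $e \mid 2d$ with $e \nmid d$, so $p \mid c^d+1$. In (iii), $c^{k} \equiv -1$, $c^{\ell} \equiv 1 \pmod p$ yields $\nu_2(e) = \nu_2(k)+1 \leq \nu_2(\ell)$, which both forces $\nu_2(k) < \nu_2(\ell)$ and gives $p \mid c^d+1$. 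To promote these divisibilities to equalities of $p$-adic valuations at odd $p$, invoke LTE: if $p$ is odd and $p \mid c^d+1$, then $\nu_p(c^{dt}+1) = \nu_p(c^d+1) + \nu_p(t)$ for odd $t$; applying this to $t=k'$ and $t=\ell'$ and using $\gcd(k',\ell')=1$ gives $\nu_p(\gcd) = \nu_p(c^d+1)$.

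The main obstacle will be the 2-adic part of the gcd, since LTE behaves differently at $p=2$. If $c$ is even, every $c^n \pm 1$ is odd, so the 2-part of the gcd is $1$, which matches the stated formulas. If $c$ is odd, I will case-split on the parities of $k$ and $\ell$ and on $c \bmod 4$, using $c^2 \equiv 1 \pmod 8$ together with the $p=2$ variant of LTE to compute $\nu_2(c^n \pm 1)$ explicitly; this identifies the 2-part of the gcd as $\nu_2(c^d+1)$ in the ``matched parity'' subcases of (ii) and (iii), and as $1$ (giving overall gcd equal to $2$) in the mismatched subcases. Combining the odd-prime equalities with this 2-adic bookkeeping produces each of the three stated values.
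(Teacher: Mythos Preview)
Your approach is correct but takes a genuinely different route from the paper's. The paper proves (ii) and (iii) by a purely elementary Euclidean-style reduction on the exponents, never invoking orders or LTE: for (ii) with $\gcd(k,\ell)=1$ it iterates $\gcd(c^{k}+1,c^{\ell}+1)=\gcd(c^{k-2\ell}+1,c^{\ell}+1)$ down to either $c+1$ or $\gcd(c-1,c+1)$, then handles general $d>1$ by the substitution $c\mapsto c^{d}$; for (iii) it reduces to (ii) via $\gcd(c^{k}+1,c^{\ell}-1)=\gcd(c^{k}+1,c^{k-\ell}+1)$ together with the observation that $\nu_2(k)=\nu_2(k-\ell)$ if and only if $\nu_2(k)<\nu_2(\ell)$. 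Your order-plus-LTE argument is more structural and explains \emph{why} the $\nu_2$ conditions appear (they record whether $-1$ lies in $\langle c\rangle$ modulo each odd prime), at the cost of heavier machinery and a separate $2$-adic case analysis that the paper's reduction sidesteps entirely. One small point to tighten: in (iii) your displayed LTE identity $\nu_p(c^{dt}+1)=\nu_p(c^{d}+1)+\nu_p(t)$ is for the plus sign with odd $t$, so it does not literally cover the factor $c^{\ell}-1$ (where $\ell'$ is even); you should instead apply the minus-case LTE to $(c^{2d})^{\ell'/2}-1$, using $\nu_p(c^{2d}-1)=\nu_p(c^{d}+1)$ since $p\nmid c^{d}-1$, after which $\gcd(k',\ell'/2)=1$ gives $\min(\nu_p(k'),\nu_p(\ell'/2))=0$ as required.
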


\begin{proof}
%
%
%3 items enumerated
%the first item: i
%the last item: iii
% the (last) widest item: iii
%
\begin{enumerate}[(iii)]
\item[(i)] This is a classical result, so we skip the proof.
\item[(ii)] Suppose that $k\geq \ell $. We first consider the case when
$\gcd (k,\ell )=1$. Then
\begin{align*}
\gcd (c^{k}+1,c^{\ell }+1) &= \gcd (c^{k}-c^{\ell },c^{\ell }+1) = \gcd (c^{
\ell }(c^{k-\ell }-1),c^{\ell }+1)
\\
&= \gcd (c^{k-\ell }-1,c^{\ell }+1) = \gcd (c^{k-\ell }+c^{\ell },c^{\ell }+1)
\\
&= \gcd (c^{\ell }(c^{k-2\ell }+1),c^{\ell }+1) = \gcd (c^{k-2\ell }+1,c^{
\ell }+1)
\\
&= \cdots
\\
&= \gcd (c+ 1, c+1) \text{ or } \gcd (c-1,c+1)
\\
&= c+1 \text{ or } \gcd (c-1,c+1).
\end{align*}
 We note that $\gcd (c^{k}+1,c^{\ell }+1)=c+1$ if and only if both $k$ and
$\ell $ are odd. Therefore
\begin{eqnarray*}
\gcd (c^{k}+1,c^{\ell }+1)=
\begin{cases}
c+1 & \text{ if }\nu _{2}(k)=\nu _{2}(\ell )
\\
2 & \text{ if } \nu _{2}(k)\neq \nu _{2}(\ell ) \text{ and } c
\text{ is odd}
\\
1 &\text{ if } \nu _{2}(k)\neq \nu _{2}(\ell ) \text{ and } c
\text{ is even}.
\end{cases}
\end{eqnarray*}
Now suppose that $d:= \gcd (k,\ell )>1$. Then
$\gcd (c^{k}+1,c^{\ell }+1) = \gcd ((c^{d})^{k/d}+1,(c^{d})^{\ell /d}+1)$.
Since $\gcd (k/d,\ell /d)=1$, the result follows from the previous case.
\item[(iii)] Suppose that $k\geq \ell $. Let $d:=\gcd (k,\ell )$. Then
$d=\gcd (k,k-\ell )$ and
\begin{align*}
\gcd (c^{k}+1,c^{\ell }-1)& = \gcd (c^{k}+1,c^{k}+c^{\ell })
\\
&= \gcd (c^{k}+1,c^{\ell }(c^{k-\ell }+1))
\\
&= \gcd (c^{k}+1,c^{k-\ell }+1)
\\
&=
\begin{cases}
c^{d}+1 & \text{ if } \nu _{2}(k)= \nu _{2}(k-\ell )
\\
2 & \text{ if } \nu _{2}(k)\neq \nu _{2}(k-\ell ) \text{ and } c
\text{ is odd}
\\
1 & \text{ if } \nu _{2}(k)\neq \nu _{2}(k-\ell ) \text{ and } c
\text{ is even}.
\end{cases}
\\
\end{align*}
It turns out that $\nu _{2}(k)=\nu _{2}(k-\ell )$ if and only if
\begin{equation*}
\dfrac{k(k-\ell )}{d^{2}} = \dfrac{k}{d}\left (\dfrac{k}{d}-
\dfrac{\ell }{d}\right )%
\end{equation*}
is odd, which is equivalent to $\nu _{2}(k) < \nu _{2}(\ell )$. Using a
similar argument, we obtain the desired result when $k< \ell $.\qedhere
\end{enumerate}
\end{proof}

Our first family consists of R\'{e}dei permutations of the form
$R_{p^{\ell },a}$ on $\mathbb{P}^{1}(\mathbb{F}_{p^{k}})$.

%p5.2 #&#
\begin{proposition}%
%%LEAP%%%\label{prop5.2}
\label{prop_p^k}
Let $q=p^{k}$ where $p$ is a prime, and
$1\leq \ell _{1}, \ell _{2} < k$. Then
$(p^{\ell _{1}},p^{\ell _{2}})\in \mathcal{S}_{\chi }^{q}$ if and only if
%
%
%2 items enumerated
%the first item: i
%the last item: ii
% the (last) widest item: ii
%
\begin{enumerate}[{\normalfont (i)}]
\item $\gcd (\ell _{1},k) = \gcd (\ell _{2},k)$ and
\item if $\chi =-1$, then either
$\nu _{2}(\ell _{1}),\nu _{2}(\ell _{2}) > \nu _{2}(k)$ or
$\nu _{2}(\ell _{1}),\nu _{2}(\ell _{2}) \leq \nu _{2}(k)$.
\end{enumerate}
In this case, when $k$ is prime, the cycle structure is
\begin{equation*}
\begin{cases}
\left (2\times \{\bullet \}\right ) \oplus \left (\dfrac{p^{2}-1}{4}
\times \mathrm{Cyc}(4)\right ) & \text{if } \chi =-1 \text{ and } k=2
\\
\left (2\times \{\bullet \}\right ){ \oplus }\left (\dfrac{p-1}{2}
{\times} \mathrm{Cyc}(2)\right ) {\oplus} \left (\dfrac{p^{k}-p}{2k}
{\times} \mathrm{Cyc}(2k)\right ) & \text{if } \chi {=}-1 \text{ and } k,
\ell _{1}, \ell _{2} \text{ are odd}
\\
\left ((p+1)\times \{\bullet \}\right ) \oplus \left (
\dfrac{p^{k}-p}{k} \times \mathrm{Cyc}(k)\right ) & \text{otherwise.}
\end{cases}
\end{equation*}
\end{proposition}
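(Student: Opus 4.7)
The plan is to reduce membership in $\mathcal S_\chi^q$ to a gcd identity and then apply Lemma~\ref{mdc} to evaluate both sides explicitly. I would first note that $p\nmid p^k-\chi$, so $\gcd(p^{\ell_i},p^k-\chi)=1$ holds automatically and each $R_{p^{\ell_i},a}$ is a permutation. Proposition~\ref{conseq1} then reduces the problem to showing $\gcd(p^{\ell_1 r}-1,p^k-\chi)=\gcd(p^{\ell_2 r}-1,p^k-\chi)$ for every positive integer~$r$.

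For $\chi=1$, Lemma~\ref{mdc}(i) rewrites each side as $p^{\gcd(\ell_i r,k)}-1$, so the condition becomes $\gcd(\ell_1 r,k)=\gcd(\ell_2 r,k)$ for all~$r$. Taking $r=1$ yields the necessity of~(i). For sufficiency I would write $\ell_i=dm_i$ and $k=dk'$ with $d=\gcd(\ell_i,k)$ and $\gcd(m_i,k')=1$, and verify $\gcd(\ell_i r,k)=d\gcd(r,k')$, which depends only on~$d$. For $\chi=-1$, Lemma~\ref{mdc}(iii) with $c=p$ odd splits the evaluation into two branches according to whether $\nu_2(\ell_i r)>\nu_2(k)$ (giving $p^{\gcd(\ell_i r,k)}+1$) or not (giving $2$). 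Since $p\ge 3$ these two branches always produce distinct values, so we need $\ell_1 r$ and $\ell_2 r$ to fall in the same branch for every~$r$ and to have matching $\gcd(\ell_i r,k)$ on the upper branch. Setting $r=1$ forces condition~(ii). For sufficiency the key observation is that when $\nu_2(\ell_i)\le\nu_2(k)$ one has $\nu_2(\gcd(\ell_i,k))=\nu_2(\ell_i)$, so (i) forces $\nu_2(\ell_1)=\nu_2(\ell_2)$ and hence $\nu_2(\ell_1 r)=\nu_2(\ell_2 r)$ for every~$r$; in the complementary case $\nu_2(\ell_i r)>\nu_2(k)$ holds for every~$r$ automatically; on the upper branch the earlier $\chi=1$ argument delivers the $\gcd$ equality.

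For the cycle structures when $k$ is prime, $\gcd(\ell,k)=1$ since $1\le\ell<k$, so I would compute the fixed-point counts $F(r):=\gcd(p^{\ell r}-1,p^k-\chi)+\chi+1$ of $R_{p^\ell,a}^r$ from Proposition~\ref{fixpt}(iii) and recover the multiplicities $a_c$ of $c$-cycles via $F(r)=\sum_{c\mid r}c\,a_c$. In the $\chi=1$ case, and in the $\chi=-1$ case with $\ell$ even and $k$ odd, the formulas collapse to $F(r)=p+1$ for $k\nmid r$ and $F(r)=p^k+1$ for $k\mid r$, so only $1$- and $k$-cycles occur, with the stated counts $p+1$ and $(p^k-p)/k$. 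For $\chi=-1$ with $k$ and $\ell$ both odd, $F(r)$ is $2$ for $r$ odd, $p+1$ for $r$ even with $k\nmid r$, and $p^k+1$ for $2k\mid r$, producing cycles of lengths $1,2,2k$ with multiplicities $2,(p-1)/2,(p^k-p)/(2k)$. For $\chi=-1$ and $k=2$ the only admissible exponent is $\ell=1$, and $F(1)=F(2)=2$, $F(4)=p^2+1$ yield the claimed $1$- and $4$-cycle structure.

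The hard part will be the necessity portion of the $\chi=-1$ case, specifically ruling out the mixed scenario $\nu_2(\ell_1)>\nu_2(k)\ge\nu_2(\ell_2)$. This is handled already at $r=1$: the two gcds then become $p^{\gcd(\ell_1,k)}+1\ge p+1\ge 4$ versus $2$, which cannot coincide. Once this is cleared, the rest of the case analysis is a careful but routine application of Lemma~\ref{mdc}.
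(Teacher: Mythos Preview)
Your approach is essentially identical to the paper's: both reduce to the gcd identity via Proposition~\ref{conseq1}, evaluate $\gcd(p^{\ell_i r}-1,p^k-\chi)$ through Lemma~\ref{mdc}, and compute the cycle structures for prime~$k$ by tracking fixed-point counts of iterates.

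There is one small omission worth flagging. In the $\chi=-1$ subcase with $\nu_2(\ell_1),\nu_2(\ell_2)\le\nu_2(k)$, you never explain how necessity of~(i) is obtained: at $r=1$ both sides sit in the lower branch and equal~$2$, so no information about $\gcd(\ell_i,k)$ is available there. The paper handles this by taking $r=2^{\nu_2(k)+1}$, which forces both exponents into the upper branch; combined with the ``same branch for every~$r$'' requirement (which, as you observe in your sufficiency argument, forces $\nu_2(\ell_1)=\nu_2(\ell_2)$ in this subcase), one can then read off $\gcd(\ell_1,k)=\gcd(\ell_2,k)$. Thus the step you single out as ``the hard part'' (ruling out the mixed scenario) is actually the easy one, dispatched at $r=1$; the line that needs a word of justification is this choice of a large-$2$-adic $r$ in the both-$\le$ case.
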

\begin{proof}
We begin by observing that $R_{p^{\ell },a}$ always induces a permutation
since $\gcd (p^{\ell },p^{k}-\chi )=1$.

Fix $r$. It is clear that $\gcd (\ell _{1},k)=\gcd (\ell _{2},k)$ if and
only if $\gcd (r\ell _{1},k)=\gcd (r\ell _{2},k)$. We treat the cases
$\chi =-1$ and $\chi =1$ separately.
\begin{enumerate}
\item[\underline{Case 1}:] $\chi =-1$%
The condition
$\gcd (p^{r\ell _{1}}-1, p^{k}+1) = \gcd (p^{r\ell _{2}}-1, p^{k}+1)$ is
equivalent to either $\nu _{2}(r\ell _{1})$,
$\nu _{2}(r\ell _{2})>\nu _{2}(k)$ and
$\gcd (\ell _{1},k)=\gcd (\ell _{2},k)$ or
$ \nu _{2}(r\ell _{1}), \nu _{2}(r\ell _{2})\leq \nu _{2}(k)$.
\begin{enumerate}
\item[\underline{Case 1.1}:]
$\nu _{2}(\ell _{1}), \nu _{2}(\ell _{2})> \nu _{2}(k)$

In this case, we have
$\gcd (p^{r\ell _{i}}-1, p^{k}+1) = p^{\gcd (r\ell _{i},k)}+1$ for
$i=1,2$, and the result follows.
\item[\underline{Case 1.2}:] $\nu _{2}(\ell _{1})$,
$\nu _{2}(\ell _{2})\leq \nu _{2}(k)$

Since $\gcd (\ell _{1},k)=\gcd (\ell _{2},k)$, it follows that
$\nu _{2}(r\ell _{1})=\nu _{2}(r\ell _{2})$, and so either
$\nu _{2}(r\ell _{1})$, $\nu _{2}(r\ell _{2})> \nu _{2}(k)$ or
$\nu _{2}(r\ell _{1}),\nu _{2}(r\ell _{2})\leq \nu _{2}(k)$. Thus
$\gcd (p^{r\ell _{1}}-1, p^{k}+1)= \gcd (p^{r\ell _{2}}-1, p^{k}+1)$ for
every positive integer $r$.

Next, we choose $r = 2^{\nu _{2}(k)+1}$ to show that the condition
$\gcd (\ell _{1},k)=\gcd (\ell _{2},k)$ is required. Otherwise, since
$\nu _{2}(r\ell _{1}), \nu _{2}(r\ell _{2})> \nu _{2}(k)$, we obtain that
\begin{equation*}
\gcd (p^{r\ell _{1}}-1, p^{k}+1) =p^{\gcd (r\ell _{1},k)}+1 \neq p^{\gcd (r\ell _{2},k)}+1 = \gcd (p^{r\ell _{2}}-1, p^{k}+1).
\end{equation*}

\item[\underline{Case 1.3}:]
$\nu _{2}(\ell _{1}) \leq \nu _{2}(k)< \nu _{2}(\ell _{2})$

In this case,
\begin{equation*}
\gcd (p^{\ell _{1}}-1,p^{k}+1) = 2 < p^{\gcd (\ell _{2},k)}+1=\gcd (p^{
\ell _{2}}-1,p^{k}+1)%
\end{equation*}
shows that the numbers of fixed points in the first iteration are not the
same.
\end{enumerate}
\item[\underline{Case 2}:] $\chi =1$%
This case follows easily since
$\gcd (p^{r\ell _{i}}-1, p^{k}-1)=p^{\gcd (r\ell _{i},k)}$ for
$i=1,2$, concluding the first part of the proof.
\end{enumerate}

We now discuss the cycle structure of $R_{p^{\ell },a}$ depending on
$\chi $, when $k$ is prime.
\begin{enumerate}
\item[\underline{Case 1}:] $\chi =-1$%
The number of fixed points in the $r^{\mathrm{th}}$ iteration of
$R_{p^{\ell },a}$ is
\begin{equation*}
\gcd (p^{r\ell }-1,p^{k}+1) =
\begin{cases}
p^{\gcd (r\ell ,k)}+1& \text{ if } \nu _{2}(r\ell )>\nu _{2}(k)
\\
2 & \text{ otherwise}.
\end{cases}
\end{equation*}
If $k=2$, then $\ell =1$ and there are $\gcd (p-1,p^{2}+1) = 2$ fixed points
in the first iteration. The number of fixed points only increases in the
fourth iteration to $\gcd (p^{4}-1,p^{2}+1) =p^{2}+1$. In this step, all
points are fixed, so the non-fixed points are distributed over
$(p^{2}-1)/4$ cycles of length $4$. Now suppose that $k$ is an odd prime.
We consider two cases.
\begin{enumerate}
\item[\underline{Case 1.1}:] $\ell $ is odd%
We have
\begin{equation*}
\gcd (p^{r\ell }-1,p^{k}+1) =
\begin{cases}
2 & \text{ if } r \text{ is odd}
\\
p^{\gcd (r\ell ,k)}+1 & \text{ if } r \text{ is even}.
\\
\end{cases}
\end{equation*}
This implies that in the first iteration there are two fixed points and
no cycle of odd length. In addition, since $\ell <k$, we have
\begin{equation*}
\gcd (r\ell ,k) =
\begin{cases}
1 & \text{ if } k\nmid r
\\
k & \text{ if } k\mid r
\\
\end{cases}
\end{equation*}
and all points are fixed in the $(2k)^{\mathrm{th}}$ iteration. So the
non-fixed points are distributed over $(p-1)/2$ cycles of length 2 and
$(p^{k}-p)/(2k)$ cycles of length $2k$.
\item[\underline{Case 1.2}:] $\ell $ is even
We have
\begin{equation*}
\gcd (p^{r\ell }-1,p^{k}+1) = p^{\gcd (r\ell ,k)}+1 =
\begin{cases}
p+1 & \text{ if } k\nmid r
\\
p^{k}+1 & \text{ if } k\mid r.
\\
\end{cases}
\end{equation*}
Hence there are $p+1$ fixed points and $(p^{k}-p)/k$ cycles of length
$k$.
\end{enumerate}
\item[\underline{Case 2}:] $\chi =1$%
By {Proposition~\ref{fixpt}}(i), the length of each cycle is given
by $o_{d}(p^{\ell })$ where $d$ is a divisor of $q-1$. Since
$p^{\ell k} \equiv 1 \pmod{d}$, each cycle length is a divisor of $k$. In
particular, if $k$ is prime then the cycles have length 1 or $k$. The number
of fixed points is
$\gcd (p^{\ell }-1,p^{k}-1) +2 = p^{\gcd (\ell ,k)} -1+2 = p+1$. There are
$p^{k}-p$ points left, and thus $(p^{k}-p)/k$ cycles of length $k$.\qedhere
\end{enumerate}
\end{proof}

The argument used in the last Case 2 shows that each cycle length of
$R_{p^{\ell },a}$ over $\mathbb{P}^{1}(\mathbb{F}_{p^{k}})$ is a divisor of
$k$ or $2k$, depending on whether $\chi (a)=1$ or $-1$, respectively.

%p5.3 #&#
\begin{proposition}%
%%LEAP%%%\label{prop5.3}
\label{isomp2kchi}
Let $q$ be a power of a prime $p$. Then
$(p,q-p+1)\in \mathcal{S}_{\chi }^{q}$ if and only if either
$\chi =-1$ and $q=p^{2k}$ for some integer $k$ or $\chi =1$ and
$q\in \{9,p\}$. In this case, the cycle structure is
\begin{equation*}
\begin{cases}
\left (2\times \{\bullet \}\right ) \oplus \displaystyle \bigoplus _{
\genfrac{}{}{0pt}{2}{d\mid 2k}{\nu _{2}(d)=\nu _{2}(2k)} } \left (N_{2d}
\times \mathrm{Cyc}(2d)\right ) &\text{ if } \chi =-1 \text{ and }
q=p^{2k}
\\
\left (4\times \{\bullet \}\right ) \oplus \left (3\times
\mathrm{Cyc}(2)\right )& \text{ if } \chi =1 \text{ and } q=9
\\
\left ((p+1)\times \{\bullet \}\right )& \text{ if } \chi =1
\text{ and } q=p,
\end{cases}
\end{equation*}
where $N_{i}$ is the number of $i$-cycles and
%
%e4 #&#
\begin{equation}
\label{cycles}
N_{2d} =\dfrac{1}{2d} \left (p^{d}+1 - \displaystyle \sum _{
\genfrac{}{}{0pt}{2}{s\mid 2d, \; s<2d}{\nu _{2}(s)=\nu _{2}(2d)} } 2sN_{2s}
-2\right ).
\end{equation}
\end{proposition}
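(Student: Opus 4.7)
The plan is to use Proposition~\ref{conseq1} to reduce the question to testing, for every positive integer $r$, the identity
\[\gcd(p^r-1,q-\chi)=\gcd((q-p+1)^r-1,q-\chi).\]
Since $q-p+1\equiv (1+\chi)-p\pmod{q-\chi}$, the second gcd becomes $\gcd((-p)^r-1,p^K+1)$ when $\chi=-1$ and $\gcd((2-p)^r-1,p^K-1)$ when $\chi=1$, writing $q=p^K$. Both sides will be controlled with Lemma~\ref{mdc}.

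For $\chi=-1$, the identity is automatic for even $r$ since $(-p)^r=p^r$. For odd $r$, Lemma~\ref{mdc}(iii) yields $\gcd(p^r-1,p^K+1)=2$, while Lemma~\ref{mdc}(ii) gives $\gcd(p^r+1,p^K+1)=p^{\gcd(r,K)}+1$ when $\nu_2(r)=\nu_2(K)$ and $2$ otherwise. Applying this at $r=1$ forces $\nu_2(K)\geq 1$, i.e., $K$ is even, which is to say $q=p^{2k}$ for some positive integer $k$. Conversely, when $K$ is even both sides equal $2$ for every odd $r$, closing this direction.

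For $\chi=1$ with $K\geq 2$, the decisive test is $r=K$: the right-hand side is $p^K-1$, so we need $p^K-1\mid(2-p)^K-1$. A size estimate $|(2-p)^K-1|\leq (p-2)^K+1<p^K-1$, valid for all $p\geq 3$ and $K\geq 2$, rules out every choice except $(2-p)^K=1$, which forces $p=3$ and $K$ even. A second test at $r=2$ then gives $\gcd(0,3^K-1)=3^K-1$ on the left and $\gcd(8,3^K-1)=8$ on the right (using $\gcd(p^2-1,p^K-1)=p^2-1$ for even $K$), leaving only $K=2$, that is, $q=9$. The case $q=p$ is immediate since $p\equiv 1\pmod{p-1}$ makes both $R_{p,a}$ and $R_{1,b}$ the identity; a direct gcd check then confirms $q\in\{9,p\}$.

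For the cycle structures, the cases $\chi=1,q=p$ (the identity on $p+1$ points) and $\chi=1,q=9$ (an involution with $\gcd(2,8)+2=4$ fixed points on a $10$-point set, hence three transpositions) are immediate. For $\chi=-1$ and $q=p^{2k}$, Proposition~\ref{fixpt}(i) shows that the cycle lengths of $R_{p,a}$ are $1$ (contributing two fixed points from $e\in\{1,2\}$) together with $T=o_e(p)$ for divisors $e>2$ of $p^{2k}+1$. Since $p^{2k}\equiv -1\pmod e$, $T$ divides $4k$ but not $2k$, which forces $T=2d$ with $d\mid 2k$ and $\nu_2(d)=\nu_2(2k)$. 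To count $N_{2d}$, I compute
\[\#\mathrm{Fix}(R_{p,a}^{2d})=\gcd(p^{2d}-1,p^{2k}+1)=p^d+1\]
from Lemma~\ref{mdc}(iii), using $\nu_2(2d)>\nu_2(2k)$ and $\gcd(2d,2k)=d$; expanding this as $\sum_{T'\mid 2d}T'N_{T'}$ over the admissible cycle lengths and solving for the top term $2d\cdot N_{2d}$ yields the stated recursion. I expect the main technical obstacle to be the careful bookkeeping of which smaller cycle lengths $T'\mid 2d$ contribute to this recursion.
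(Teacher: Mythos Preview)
Your plan is correct and mirrors the paper's proof: both reduce via Proposition~\ref{conseq1} to the gcd identity, evaluate it with Lemma~\ref{mdc}, use the size bound $(p-2)^K+1<p^K-1$ to handle $\chi=1$, and derive the recursion for $N_{2d}$ from the fixed-point count $\gcd(p^{2d}-1,p^{2k}+1)=p^d+1$. The only cosmetic difference is that the paper splits the $\chi=1$ case on $p=3$ versus $p\neq 3$ (testing at $r=2$ and $r=k$ respectively), whereas you test at $r=K$ first and then at $r=2$; also note that in your write-up you have interchanged ``left'' and ``right'' in two places (at $r=K$ it is the \emph{left} side that equals $p^K-1$, and at $r=2$ it is the \emph{right} side that equals $3^K-1$), though the logic is unaffected.
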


\begin{proof}
We consider four cases.
\begin{enumerate}
\item[\underline{Case 1}:] $q=p$

We have $\gcd (0,p-\chi )+1+\chi =p+1$. When $\chi =1$, we have
$\gcd (p^{r}-1,p-1)+2= p+1$, so $(p,1)\in \mathcal{S}_{1}^{p}$. When
$\chi =-1$ and $r=1$, we have $\gcd (p-1,p+1)= 2\neq p+1$, so
$(p,1)\notin \mathcal{S}_{-1}^{p}$
\item[\underline{Case 2}:] $q=p^{k}$ with $k>1$ and $\chi =1$

We first consider the case $p=3$. When $r=2$, we have that
\begin{equation*}
\gcd (3^{2}-1,3^{k}-1) =
\begin{cases}
2 & \text{ if } k \text{ is odd}
\\
8 & \text{ if } k \text{ is even}
\\
\end{cases}
\end{equation*}
and $\gcd ((3^{k}-3+1)^{2}-1,3^{k}-1) = 3^{k}-1$ are different if and only
if $k\neq 2$. The result for $k=2$ follows from {Proposition~\ref{fixpt}}(i).
When $p\neq 3$, take $r=k$. We have that
$\gcd (p^{k}-1,p^{k}-1) = p^{k}-1$ and
\begin{equation*}
\gcd ((p^{k}-p+1)^{k}-1,p^{k}-1) =
\begin{cases}
\gcd ((p-2)^{k}+1,p^{k}-1) & \text{ if } k \text{ is odd}
\\
\gcd ((p-2)^{k}-1,p^{k}-1) & \text{ if } k \text{ is even}
\\
\end{cases}
\end{equation*}
are different, since it can be shown by induction that
$p^{k}-1> (p-2)^{k}+1$.
\item[\underline{Case 3}:] $q=p^{2k+1}$ and $\chi =-1$

For $r=1$, we have that
\begin{equation*}
\gcd (p-1,p^{2k+1}+1) = 2%
\end{equation*}
and
\begin{equation*}
\gcd ((p^{2k+1}-p+1)-1,p^{2k+1}+1) =\gcd (p+1,p^{2k+1}+1) =p+1
\end{equation*}
are different.

\item[\underline{Case 4}:] $q=p^{2k}$ and $\chi =-1$

We have that
\begin{align*}
\gcd ((q-p+1)^{r}-1,q+1) &= \gcd ((-p)^{r}-1,p^{2k}+1)
\\
&=
\begin{cases}
\gcd (p^{r}+1,p^{2k}+1)=2 & \text{ if } r \text{ is odd}
\\
\gcd (p^{r}-1,p^{2k}+1) & \text{ if } r \text{ is even}.
\end{cases}
\end{align*}
When $r$ is odd, we also have that $\gcd (p^{r}-1,p^{2k}+1)=2$. Therefore
$(p,p^{2k}-p+1)\in S_{-1}^{p^{2k}}$. In this case, the number of fixed
points in the $r^{\mathrm{th}}$ iteration is
\begin{align*}
\gcd (p^{r}-1,p^{2k}+1) =
\begin{cases}
2 & \text{ if } \nu _{2}(r) \leq \nu _{2}(2k)
\\
p^{\gcd (r,2k)}+1 & \text{ if } \nu _{2}(r) >\nu _{2}(2k).
\end{cases}
\end{align*}
In particular, there is no cycle of odd length other than one. If
$r$ is even, then there are more than two fixed points in the
$r^{\mathrm{th}}$ iteration if and only if
$\nu _{2}(r)> \nu _{2}(2k)$. In this case, the number of fixed points is
$p^{d}+1$ where $d = \gcd (r,2k)$ with $\nu _{2}(d) = \nu _{2}(2k)$. The
first iteration with $p^{d}+1$ fixed points corresponds to $r = 2d$. Thus,
the graph has $2d$-cycles for each divisor $d$ of $2k$ such that
$\nu _{2}(d) = \nu _{2}(2k)$. Then
\begin{equation*}
2d N_{2d} = p^{d}+1 - \sum _{
\genfrac{}{}{0pt}{2}{s\mid 2d,\; s<2d}{\nu _{2}(s)=\nu _{2}(2d)} } 2sN_{2s}
-2.\qedhere
\end{equation*}
\end{enumerate}
\end{proof}

The following example illustrates the previous result.

%e5.4 #&#
\begin{example}
\label{exmp5.4}
By {Proposition~\ref{isomp2kchi}}, both $R_{3,a}$ and $R_{3^{60}-2,b}$ have
the same cycle structure on
$\mathbb{P}^{1}\left (\mathbb{F}_{3^{60}}\right )$ when
$\chi (a)=\chi (b)=-1$. To obtain the number of cycles and their corresponding
lengths, we apply~{\eqref{cycles}} to every positive divisor $d$ of
$2k=60$ with $\nu _{2}(d)=\nu _{2}(60)=2$.
\begin{itemize}
\item[-] For $d=4$, we get $N_{8} = \left (3^{4} +1-2\right )/8$, so
$N_{8}=10$.
\item[-] For $d=12$, we get
$N_{24} =\left (3^{12} +1-8N_{8}-2\right )/24$, so $N_{24}=22,140$
\item[-] For $d=20$, we get
$N_{40} = \left (3^{20} +1-8N_{8}-2\right )/40$, so
$N_{40}=87,169,608$.
\item[-] For $d=60$, we get
$N_{120} =\left (3^{60} +1-8N_{8}-24N_{24} -40N_{40}-2\right )/120$, so
\begin{equation*}
N_{120}=353,259,652,293,468,362,590,059,312.%
\end{equation*}
\end{itemize}
Hence the cycle structure is
\begin{align*}
& \left (2\times \{\bullet \}\right ) \oplus \left (10 \times
\mathrm{Cyc}(8)\right ) \oplus \left (22,140 \times \mathrm{Cyc}(24)
\right ) \oplus \left (87,169,608 \times \mathrm{Cyc}(40)\right )
\\
\oplus & \left (353,259,652,293,468,362,590,059,312 \times
\mathrm{Cyc}(120)\right ).
\end{align*}
\end{example}

We observe that $q=p^{k}$ satisfying $q\equiv 1 \pmod{8}$ is equivalent
to having either $p\equiv 1\pmod{8}$ or $k$ even. In particular, it applies
to any $q$ that is an even power of an odd prime.

%p5.5 #&#
\begin{proposition}%
%%LEAP%%%\label{prop5.5}
\label{propp^2k}
If $q \equiv \chi \pmod{8}$, then
\begin{equation*}
\left (\frac{q-\chi }{4}+1,\frac{3(q-\chi )}{4}+1\right )\in S_{\chi }^{q}.%
\end{equation*}
In this case, the cycle structure is
\begin{eqnarray*}
\begin{cases}
\left (\left (\dfrac{q-\chi }{4} +\chi +1\right )\times \{\bullet \}
\right ) \oplus \left (\dfrac{3(q-\chi )}{8} \times \mathrm{Cyc}(2)
\right ) & \text{if } \dfrac{q-\chi }{8} \text{ is odd}
\\
\left (\left (\dfrac{q-\chi }{4} {+}\chi {+}1\right ){\times} \{\bullet \}
\right ){\oplus} \left (\dfrac{q-\chi }{8} {\times} \mathrm{Cyc}(2)\right )
\oplus \left (\dfrac{q-\chi }{8} {\times} \mathrm{Cyc}(4)\right ) &
\text{if } \dfrac{q-\chi }{8} \text{ is even}.
\end{cases}
\end{eqnarray*}
\end{proposition}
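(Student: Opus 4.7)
The plan is to establish the membership claim by invoking Remark~\ref{remark3} with the specific choice $m = (q-\chi)/4 + 1$, and then to read off the cycle structure directly from the fixed-point counts supplied by Proposition~\ref{fixpt}\eqref{rpontosfixos}. Write $Q = q - \chi$ and $\alpha = \nu_2(Q)$; the hypothesis $q \equiv \chi \pmod 8$ is precisely $\alpha \geq 3$. Set $m = Q/4 + 1$, so that $n = 3Q/4 + 1 = m + Q/2$. Since $m - 1 = Q/4$ is divisible by every odd prime power of $Q$, $m$ is coprime to every odd prime factor of $Q$; and $m$ is odd because $Q/4$ is even (as $\alpha \geq 3$). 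Hence $\gcd(m, Q) = 1$. Moreover $\nu_2(m-1) = \alpha - 2 < \alpha - 1$, so $m \not\equiv 1 \pmod{2^{\alpha-1}}$, and Remark~\ref{remark3} immediately yields $(m, m + Q/2) = (Q/4 + 1, 3Q/4 + 1) \in S_\chi^q$.

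To obtain the cycle structure of $R_{m,a}$, Proposition~\ref{fixpt}\eqref{rpontosfixos} says the number of fixed points of $R_{m,a}^r$ equals $\gcd(m^r - 1, Q) + \chi + 1$. At $r = 1$ this gives $Q/4 + \chi + 1$, matching the first summand in both cases of the statement. For $r = 2$, factor $m^2 - 1 = (Q/4)(Q/4 + 2)$: every odd prime power of $Q$ already divides $m-1$, so the only nontrivial question is the size of $\nu_2(m^2 - 1) = (\alpha - 2) + 1 + \nu_2(Q/8 + 1)$. A one-line parity check gives $\nu_2(m^2 - 1) \geq \alpha$ when $Q/8$ is odd and $\nu_2(m^2 - 1) = \alpha - 1$ when $Q/8$ is even. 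In the first case $\gcd(m^2 - 1, Q) = Q$, so $R_{m,a}^2 = \mathrm{id}$ and the $3Q/4$ non-fixed points split into $3Q/8$ cycles of length $2$. In the second case $\gcd(m^2 - 1, Q) = Q/2$, yielding $Q/8$ cycles of length $2$; then one more iteration, using $\nu_2(m^2 + 1) = 1$ (valid for any odd $m$), pushes the $2$-valuation up to $\alpha$, so $R_{m,a}^4 = \mathrm{id}$ and the remaining $Q/2$ points form $Q/8$ cycles of length $4$.

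The only delicate ingredient is the $2$-adic valuation of $m^2 - 1$, whose behavior hinges on whether $\alpha = 3$ (making $Q/8$ odd) or $\alpha \geq 4$ (making $Q/8$ even); once this is settled, the rest is straightforward bookkeeping with Proposition~\ref{fixpt}\eqref{rpontosfixos}.
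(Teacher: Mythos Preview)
Your proof is correct, and the approach differs meaningfully from the paper's.  For the membership claim, the paper expands $\bigl((q-\chi)/4+1\bigr)^r-1$ and $\bigl(3(q-\chi)/4+1\bigr)^r-1$ binomially, observes that the terms of index $i\geq 3$ vanish modulo $q-\chi$, and then checks by hand that the two resulting expressions give the same $\gcd$ with $q-\chi$ for every $r$.  You instead recognize that $n=m+(q-\chi)/2$ and appeal directly to Remark~\ref{remark3}: once you verify $\gcd(m,q-\chi)=1$ and $\nu_2(m-1)=\alpha-2<\alpha-1$, the membership $(m,n)\in S_\chi^q$ follows immediately.  This is shorter and explains \emph{why} the pair lies in $S_\chi^q$ in terms of the structural results already proved, whereas the paper's computation is self-contained but opaque.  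For the cycle structure, both arguments ultimately read off fixed-point counts from Proposition~\ref{fixpt}\eqref{rpontosfixos}; the paper again uses the binomial reduction, while you factor $m^2-1=(Q/4)(Q/4+2)$ directly and track the $2$-adic valuation, which is cleaner.  Your case split ``$Q/8$ odd $\Leftrightarrow \alpha=3$'' versus ``$Q/8$ even $\Leftrightarrow \alpha\geq 4$'' is exactly the right dichotomy, and the conclusion that $R_{m,a}^4=\mathrm{id}$ (so all cycle lengths divide $4$) justifies the final count without needing to examine $r=3$ separately.
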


\begin{proof}
If $q \equiv \chi \pmod{8}$, then $\gcd ((q-\chi )/4+1,q-\chi )=1$ and
$\gcd ((3(q-\chi ))/4+1,q-\chi )=1$, so both
$R_{\frac{q-\chi }{4}+1,a}$ and $R_{\frac{3(q-\chi )}{4}+1,b}$ induce permutations
when $\chi (a)=\chi (b)=\chi $. Their cycle structures are the same if
and only if
%
%e5 #&#
\begin{equation}
\label{twogcds}
\gcd \left ( \left (\displaystyle \frac{q-\chi }{4}+1\right )^{r}-1,q-
\chi \right ) = \gcd \left (\left (\displaystyle \frac{3(q-\chi )}{4}+1
\right )^{r}-1,q-\chi \right )
\end{equation}
for every positive integer $r$. We have
\begin{eqnarray*}
\left (\displaystyle \frac{q-\chi }{4}+1\right )^{r}-1 =
\displaystyle \sum _{i=1}^{r} \binom{r}{i} \left (\displaystyle
\frac{q-\chi }{4} \right )^{i}.
\end{eqnarray*}
By writing $(q-\chi )/4=2k$ for some integer $k$ and
\begin{equation*}
\left (\displaystyle \frac{q-\chi }{4} \right )^{i} = \frac{q-\chi }{4}
\left (\displaystyle \frac{q-\chi }{4} \right )^{i-1} = (q-\chi )
\cdot \frac{(2k)^{i-1}}{4},%
\end{equation*}
it follows that $\left (\displaystyle \frac{q-\chi }{4} \right )^{i}$ is
a multiple of $q-\chi $ for $i\geq 3$. We conclude that
%
%e6 #&#
\begin{align}
\gcd \left ( \left (\displaystyle \frac{q-\chi }{4}+1\right )^{r}-1,q-
\chi \right ) & = \gcd \left ( r\cdot \frac{q-\chi }{4} + r(r-1)\cdot
\frac{(q-\chi )^{2}}{32} ,q-\chi \right )
\nonumber
\\
& = \frac{q-\chi }{4}\cdot \gcd \left ( r\left (1+(r-1)\cdot
\frac{q-\chi }{8}\right ),4\right ).
\label{gcd1}
\end{align}
Analogously, we have
\begin{eqnarray*}
\left (\displaystyle \frac{3(q-\chi )}{4}+1\right )^{r}-1 =
\displaystyle \sum _{i=1}^{r} \binom{r}{i} 3^{i}\left (\displaystyle
\frac{q-\chi }{4} \right )^{i},
\end{eqnarray*}
so
%
%e7 #&#
\begin{align}
\gcd \left (\left (\displaystyle \frac{3(q-\chi )}{4}+1\right )^{r}-1
,q-\chi \right ) = & \gcd \left (3r\cdot \frac{q-\chi }{4} + 9r(r-1) \cdot
\frac{(q-\chi )^{2}}{32} ,q-\chi \right )
\nonumber
\\
= & \frac{q-\chi }{4}\cdot \gcd \left (3r\left (1+3(r-1)\cdot
\frac{q-\chi }{8}\right ),4\right )
\nonumber
\\
= & \frac{q-\chi }{4}\cdot \gcd \left (r\left (1+3(r-1)\cdot
\frac{q-\chi }{8}\right ),4\right )
\label{gcd2}
.
\end{align}
The greatest common divisors in~{\eqref{gcd1}} and~{\eqref{gcd2}} are equal
to
\begin{equation*}
\label{gcdvalue}
\begin{cases}
1 & \text{if } \nu _{2}(r)=0
\\
2 & \text{if } \nu _{2}(r)=1 \text{ and } \dfrac{q-\chi }{8}
\text{ is even}
\\
4 & \text{if either } \nu _{2}(r)\geq 2\text{ or } \nu _{2}(r)=1
\text{ and } \dfrac{q-\chi }{8} \text{ is odd}.
\end{cases}
\end{equation*}
 In any event, we have that {\eqref{twogcds}} holds for any positive integer
$r$. When $r$ is odd, the number of fixed points in any
$r^{\mathrm{th}}$ iteration is $(q-\chi )/4 +\chi +1$, and so the only
cycle of odd length has length one. In the second iteration, the number
of fixed points is
\begin{equation*}
\begin{cases}
q+1& \text{if } \dfrac{q-\chi }{8} \text{ is odd}
\\
\dfrac{q-\chi }{2} +\chi +1& \text{if } \dfrac{q-\chi }{8}
\text{ is even.}
\end{cases}
\end{equation*}
If $(q-\chi )/8$ is odd, the R\'{e}dei function is an involution with
$(3(q-\chi ))/8$ cycles of length 2. Otherwise, there are
$(q-\chi )/8$ cycles of length 2. In this case, all points are fixed in
the fourth iteration, and so the remaining points are distributed over
$(q-\chi )/8$ cycles of length 4.
\end{proof}

%p5.6 #&#
\begin{proposition}%
%%LEAP%%%\label{prop5.6}
\label{qcongchipm2}
If $q \equiv \chi \pm 2 \pmod{8}$, then
\begin{equation*}
\left (\dfrac{q-\chi \pm 2}{4},\dfrac{q-\chi \pm 4}{2}\right )\in S_{
\chi }^{q}.%
\end{equation*}
\end{proposition}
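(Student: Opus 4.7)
The plan is to apply Proposition~\ref{conseq1} directly. Set $N=q-\chi$, so the hypothesis $q\equiv\chi\pm 2\pmod 8$ means $N\equiv\pm 2\pmod 8$, with the two signs corresponding to the two cases of the statement. Let $m=(N\pm 2)/4$ and $n=(N\pm 4)/2$ with matching signs. First I would verify that $m$ and $n$ are odd integers coprime to $N$, so that the R\'edei functions are permutations. Indeed, $N\pm 2\equiv 4\pmod 8$ shows $m$ is an odd integer, and $N\pm 4$ has $2$-adic valuation exactly $1$ so $n$ is an odd integer. From $4m=N\pm 2$ and $2n=N\pm 4$ one sees that any common divisor of $m$ (respectively $n$) with $N$ divides $2$ (respectively $4$); being odd, it must equal $1$.

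Write $N=2M$ with $M$ odd. Since $m$ and $n$ are odd, $\gcd(m^r-1,2)=\gcd(n^r-1,2)=2$ for every positive integer $r$, so proving the condition of Proposition~\ref{conseq1} reduces to showing
\[\gcd(m^r-1,M)=\gcd(n^r-1,M)\qquad\text{for every }r\geq 1.\]

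The key observation is that $mn\equiv 1\pmod M$. Indeed, $4m=2M\pm 2$ gives $2m\equiv\pm 1\pmod M$, and $2n=2M\pm 4$ gives $n\equiv\pm 2\pmod M$, with matching signs; multiplying yields $2mn\equiv 2\pmod M$, and since $M$ is odd we may cancel the $2$ to obtain $mn\equiv 1\pmod M$. Consequently $m^r\equiv n^{-r}\pmod M$, and multiplying $m^r-1$ by the unit $n^r$ modulo $M$ gives $\gcd(m^r-1,M)=\gcd(n^r-n^rm^r,M)=\gcd(n^r-1,M)$, which finishes the verification. The only mild obstacle is the sign bookkeeping in the two cases; once the identity $mn\equiv 1\pmod M$ is spotted, the rest is immediate.
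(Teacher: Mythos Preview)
Your proof is correct and, in fact, cleaner than the paper's own argument. Both proofs begin the same way: check that $m$ and $n$ are odd and coprime to $N=q-\chi$, split off the factor $2$ from $N$, and reduce to proving $\gcd(m^r-1,M)=\gcd(n^r-1,M)$ for the odd part $M=N/2$. The divergence is in how that last equality is established.

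The paper writes $N=8k\pm 2$, reduces $m$ and $n$ modulo $M=4k\pm 1$ to $-2k$ and $\pm 2$ respectively, and then proves by direct divisor-chasing that $d\mid(-2k)^r-1$ if and only if $d\mid(\pm 2)^r-1$ for every $d\mid M$, treating $r=1$ and $r\geq 2$ separately. Your route is more conceptual: you observe that $2m\equiv\pm 1$ and $n\equiv\pm 2\pmod M$ with matching signs, hence $mn\equiv 1\pmod M$; the gcd equality then follows in one stroke by multiplying $m^r-1$ by the unit $n^r$. Since $m$ and $n$ are both odd, your congruence actually lifts to $mn\equiv 1\pmod{q-\chi}$, so $n$ is the multiplicative inverse of $m$ modulo $q-\chi$; in particular this proposition is a concrete instance of the phenomenon behind Proposition~\ref{isolated}. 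The paper's computation implicitly uses the same relation (indeed $(-2k)(\pm 2)\equiv 1\pmod{4k\pm 1}$), but never isolates it, which is why its argument is longer and requires the $r=1$ versus $r\geq 2$ split that yours avoids.
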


\begin{proof}
If $q \equiv \chi \pm 2 \pmod{8}$, then $(q-\chi \pm 2)/4$ and
$(q-\chi \pm 4)/2$ are odd and $q-\chi $ is even. If
$d \mid (q-\chi \pm 4)/2$ and $d \mid q-\chi $, then
$d \mid 2\left ( \frac{q-\chi \pm 4}{2}\right ) -(q-\chi ) = \pm 4$. Also,
if $d' \mid (q-\chi \pm 2)/4$ and $d' \mid q-\chi $, then
$d' \mid 4\left (\frac{q-\chi \pm 2}{4}\right ) - (q-\chi ) = \pm 2$. Thus
$\gcd ((q-\chi \pm 4)/2,q-\chi )=1$ and
$\gcd ((q-\chi \pm 2)/4,q-\chi )=1$ imply that both
$R_{(q-\chi \pm 2)/4,a}$ and $R_{(q-\chi \pm 4)/2,b}$ induce permutations.
Their cycle structures are the same if and only if
\begin{equation}
\label{twogcdsv1}
\gcd \left ( \left (\displaystyle \frac{q-\chi \pm 4}{2}\right )^{r}-1,q-
\chi \right ) = \gcd \left (\left (\displaystyle
\frac{q-\chi \pm 2}{4}\right )^{r}-1,q-\chi \right )
\end{equation}
for every positive integer $r$. Write $q-\chi =8k\pm 2$ for some integer
$k$. Then $(q-\chi \pm 2)/4=2k\pm 1$ and $(q-\chi \pm 4)/2=4k\pm 3$. Since
$q-\chi = 2(4k\pm 1)$, it follows that
\begin{align*}
\gcd \left ( \left (\dfrac{q-\chi \pm 2}{4}\right )^{r}-1,q-\chi
\right )
=&\gcd \left ( \left (2k\pm 1\right )^{r}-1,2\right )\cdot \gcd
\left ( \left (2k\pm 1\right )^{r}-1,4k\pm 1\right )
\\
=&2 \gcd \left ( \left (2k\pm 1\right )^{r}-1,4k\pm 1\right )
\end{align*}
and
\begin{align*}
\gcd \left ( \left (\dfrac{q-\chi \pm 4}{2}\right )^{r}-1,q-\chi
\right )
=&\gcd \left ( \left (4k\pm 3\right )^{r}-1,2\right )\cdot \gcd
\left ( \left (4k\pm 3\right )^{r}-1,4k\pm 1\right )
\\
= &2 \gcd \left ( \left (4k\pm 3\right )^{r}-1,4k\pm 1\right ).
\end{align*}
Furthermore, since $2k\pm 1 \equiv -2k \pmod{4k\pm 1}$ and
$4k\pm 3 \equiv \pm 2 \pmod{4k\pm 1}$, we have
\begin{equation*}
\gcd \left ( \left (2k\pm 1\right )^{r}-1,4k\pm 1\right ) = \gcd
\left ( \left (-2k\right )^{r}-1,4k\pm 1\right )%
\end{equation*}
and
\begin{equation*}
\gcd \left ( \left (4k\pm 3\right )^{r}-1,4k\pm 1\right ) = \gcd
\left ( (\pm 2)^{r}-1,4k\pm 1\right ).%
\end{equation*}
Hence {\eqref{twogcdsv1}} is equivalent to
\begin{equation*}
\label{twogcdsv2}
\gcd ((-2k)^{r}-1,4k\pm 1)=\gcd ((\pm 2)^{r}-1,4k\pm 1).
\end{equation*}
This equality holds for $r=1$ as both gcds are either $1$ or $3$. For
$r\geq 2$, we claim that if $d\mid 4k\pm 1$, then
$d\mid (-2k)^{r}-1$ if and only if $d\mid (\pm 2)^{r}-1$. First, suppose
$d\mid 4k\pm 1$ and $d\mid (-2k)^{r}-1$. Then
$4k\equiv \mp 1\pmod{d}$, so $(4k)^{r}\equiv (\mp 1)^{r}\pmod{d}$. On
the other hand,
\begin{equation*}
(4k)^{r}=(-2)^{r}(-2k)^{r}\equiv (-2)^{r}\pmod{d}.%
\end{equation*}
Therefore $(-2)^{r}\equiv (\mp 1)^{r}\pmod{d}$, so
$(\pm 2)^{r}\equiv 1 \pmod{d}$. Conversely, suppose $d\mid 4k\pm 1$ and
$d\mid (\pm 2)^{r}-1$. Then
$d\mid 4k\pm (\pm 2)^{r} = 4(k\pm (\pm 2)^{r-2})$ implies that
$d\mid k\pm (\pm 2)^{r-2}$ since $d$ is odd. So
$-k\equiv \pm (\pm 2)^{r-2}\pmod{d}$ and
$(\pm 2)^{r}\equiv 1 \pmod{d}$ give that
\begin{equation*}
(-2k)^{r}=(-k)^{r}2^{r}\equiv (\pm (\pm 2)^{r-2})^{r}2^{r}\equiv 1
\pmod{d}.%
\qedhere \end{equation*}
\end{proof}

%s6 #&#
\section{Concluding remarks}
%%LEAP%%%\label{sec6}
\label{conclusao}

We recall that $\mathcal{S}_{\chi }^{q}$ is the set of all
$(m,n)\in \mathbb{N}^{2}$ such that $R_{m,a}$ and $R_{n,b}$ are R\'{e}dei
permutations with the same cycle structure for some
$a,b\in {\mathbb{F}}_{q}$ with $\chi (a)=\chi (b)=\chi $. In this paper
we provide a criterion for a pair to belong to $S_{\chi }^{q}$ that can
be used to generate all R\'{e}dei permutations $R_{m,a}$ with the same cycle
structure, subject to $a$ being a square or not in $\mathbb{F}_{q}$. We
also completely describe the counterpart, consisting of R\'{e}dei permutations
that are isolated, by showing that they are precisely the isolated R\'{e}dei
involutions. This provides a link with our previous work, where we establish
a formula for $m$ such that $R_{m,a}$ is an isolated involution, and for
all pairs of non-isolated involutions in $S_{\chi }^{q}$. If $(m,n)$ and
$(m,n')$ are pairs of involutions in $S_{\chi }^{q}$, then
$n'\equiv m \text{ or } n \pmod{q-\chi }$. In other words, there are at most
two involutions with the same cycle structure.

When $a$ is a square in $\mathbb{F}_{q}$, we let
$\mathbb{D}_{q}=\mathbb{P}^{1}({\mathbb{F}}_{q}) \setminus \{\pm \sqrt{a}
\}$. Since $-\sqrt{a}$ and $\sqrt{a}$ are fixed points of $R_{m,a}$, the
cycle structures of $R_{m,a}$ over $\mathbb{P}^{1}({\mathbb{F}}_{q})$ and
$\mathbb{D}_{q}$ are the same. Let
$f_{m}\colon \mathbb{Z}_{k} \to \mathbb{Z}_{k}$ be the mapping defined by
$f_{m}(x)=mx$, and $U_{q+1}$ be the multiplicative subgroup of order
$q+1$ in $\mathbb{F}_{q^{2}}$. In~\cite{MR3384830} it is shown that the
cycle structures of $R_{m,a}$ over $\mathbb{P}^{1}({\mathbb{F}}_{q})$,
$x^{m}$ over $U_{q+1}$, and $f_{m}$ over $\mathbb{Z}_{q+1}$ are the same
when $\chi (a)=-1$. In addition, the cycle structures of $R_{m,a}$ over
$\mathbb{D}_{q}$, $x^{m}$ over $\mathbb{F}_{q}^{*}$, and $f_{m}$ over
$\mathbb{Z}_{q-1}$ are the same when $\chi (a)=1$. This allows us to transfer
all our results in this paper to these settings. To make this more precise,
we define the following three sets:
\begin{align*}
\mathcal{T}_{\chi }^{q} &{=}\{ (m,n)\in \mathbb{N}^{2} \colon f_{m}
\text{ and } f_{n} \text{ are bijections over } \mathbb{Z}_{q-\chi }
\text{ with the same cycle structure}\},
\\
\mathcal{U}_{-1}^{q} &{=} \{(m,n)\in \mathbb{N}^{2}\colon x^{m}
\text{ and } x^{n} \text{ are bijections over } U_{q+1}
\text{ with the same cycle structure}\},
\\
\mathcal{U}_{1}^{q} &{=}\{(m,n)\in \mathbb{N}^{2}\colon x^{m}
\text{ and } x^{n} \text{ are bijections over } \mathbb{F}_{q}^{*}
\text{ with the same cycle structure}\}.
\end{align*}
It follows that
$\mathcal{S}_{\chi }^{q} = \mathcal{T}_{\chi }^{q} = \mathcal{U}_{\chi }^{q}$.
Consequently, all results that hold for $\mathcal{S}_{\chi }^{q}$ also hold
for $\mathcal{T}_{\chi }^{q}$ and $\mathcal{U}_{\chi }^{q}$. Moreover, by {Corollary~\ref{coroisolated}}
the only functions $f_{m}$ and $x^{m}$ that are isolated in their respective
domains are the isolated involutions.

%\section*{CRediT authorship contribution statement}

%DataAvailability

\section{Acknowledgements}
The authors would like to thank Daniel Panario, Claudio Qureshi, and Satyanand
Singh for helpful discussions. The second author received support for this
project provided by a
PSC-CUNY grant, jointly funded by
The Professional Staff Congress and The City University of New York.

%\begin{appm}
%\def\the...{}
%\reset{}{}
%\appendix{}
%\appendix*{}
%\end{appm}%
%
%%********** End of text entry *****************%%
%
%\begin{backmatter}%
% structpyb loaded by rpaulauskaite, 2022-04-15 13:46:14

------------------------------------------------------------------------------------------

%\end{backmatter}

\begin{thebibliography}{10}

\bibitem{MR240184}
{\sc Ahmad, S.}
\newblock Cycle structure of automorphisms of finite cyclic groups.
\newblock {\em J. Combinatorial Theory 6\/} (1969), 370--374.

\bibitem{MR4171318}
{\sc Capaverde, J., Masuda, A.~M., and Rodrigues, V.~M.}
\newblock R\'{e}dei permutations with cycles of the same length.
\newblock {\em Des. Codes Cryptogr. 88}, 12 (2020), 2561--2579.

\bibitem{MR137702}
{\sc Carlitz, L.}
\newblock A note on permutation functions over a finite field.
\newblock {\em Duke Math. J. 29\/} (1962), 325--332.

\bibitem{MR2435050}
{\sc \c{C}e\c{s}melio\u{g}lu, A., Meidl, W., and Topuzo\u{g}lu, A.}
\newblock On the cycle structure of permutation polynomials.
\newblock {\em Finite Fields Appl. 14}, 3 (2008), 593--614.

\bibitem{bibVTEX}
{\sc Deng, G.}
\newblock Isomorphic digraphs from affine maps of finite cyclic groups.
\newblock {\em International Scholarly Research Notices 2013\/} (2013).

\bibitem{MR4149368}
{\sc Gerike, D., and Kyureghyan, G.~M.}
\newblock Permutations on finite fields with invariant cycle structure on
  lines.
\newblock {\em Des. Codes Cryptogr. 88}, 9 (2020), 1723--1740.

\bibitem{MR1159877}
{\sc Lidl, R., and Mullen, G.~L.}
\newblock Cycle structure of {D}ickson permutation polynomials.
\newblock {\em Math. J. Okayama Univ. 33\/} (1991), 1--11.

\bibitem{MR959697}
{\sc Mullen, G.~L., and Vaughan, T.~P.}
\newblock Cycles of linear permutations over a finite field.
\newblock {\em Linear Algebra Appl. 108\/} (1988), 63--82.

\bibitem{MR3911214}
{\sc Panario, D., and Reis, L.}
\newblock The functional graph of linear maps over finite fields and
  applications.
\newblock {\em Des. Codes Cryptogr. 87}, 2-3 (2019), 437--453.

\bibitem{MR3384830}
{\sc Qureshi, C., and Panario, D.}
\newblock R\'{e}dei actions on finite fields and multiplication map in cyclic
  group.
\newblock {\em SIAM J. Discrete Math. 29}, 3 (2015), 1486--1503.

\bibitem{MR3911212}
{\sc Qureshi, C., and Panario, D.}
\newblock The graph structure of {C}hebyshev polynomials over finite fields and
  applications.
\newblock {\em Des. Codes Cryptogr. 87}, 2-3 (2019), 393--416.

\bibitem{MR4081007}
{\sc Reis, L., and Ribas, S.}
\newblock Permutations from an arithmetic setting.
\newblock {\em Discrete Math. 343}, 8 (2020), 111923, 12.

\end{thebibliography}
\end{document}